\newtheorem{theorem}{Theorem}[section]
\newtheorem{corollary}[theorem]{Corollary}
\newtheorem{remark}[theorem]{Remark}
\newtheorem{lemma}[theorem]{Lemma}
\newtheorem{proposition}[theorem]{Proposition}
\newtheorem{example}[theorem]{Example}
\numberwithin{equation}{section}
\def\r{\mathbb{R}}
\def\rn{\mathbb{R}^N}
\def\z{\mathbb{Z}}
\def\n{\mathbb{N}}
\def\cc{\mathbb{C}}
\def\eps{\varepsilon}
\def\rh{\rightharpoonup}
\def\irn{\int_{\r^N}}
\def\vp{\varphi}
\def\o{\Omega}
\def\t{\Theta}
\def\bf{\boldsymbol}
\def\cC{\mathcal{C}}
\def\cH{\mathcal{H}}
\def\cJ{\mathcal{J}}
\def\cM{\mathcal{M}}
\def\cN{\mathcal{N}}
\def\cP{\mathcal{P}}
\def\cW{\mathcal{W}}
\def\supp{\text{supp}}
\def\what{\widehat}
\def\d{\,\mathrm{d}}
\def\e{\mathrm{e}}
\def\i{\mathrm{i}}
\author{Mónica Clapp\footnote{M. Clapp was supported by CONACYT (Mexico) through the research grant A1-S-10457.} \ and Angela Pistoia\footnote{A. Pistoia was partially supported by INDAM-GNAMPA funds and Fondi di Ateneo ``Sapienza" Universit\`a di Roma (Italy).}}
\title{Pinwheel solutions to Schrödinger systems}
\date{\today}
\begin{document}
\maketitle

\begin{abstract}
We establish the existence of positive segregated solutions for competitive nonlinear Schrödinger systems in the presence of an external trapping potential, which have the property that each component is obtained from the previous one by a rotation, and we study their behavior as the forces of interaction become very small or very large. 

As a consequence, we obtain optimal partitions for the Schrödinger equation by sets that are linearly isometric to each other. 
\footnote{ 2020 Mathematics Subject Classification: 35J50 (35J47, 35B06, 35B07, 35B40)}
\footnote{ Keywords: Schr\"odinger systems, segregated solutions, phase separation, optimal partition.}
\end{abstract}


\section{Introduction}
Consider the nonlinear Schrödinger system
\begin{equation} \label{eq:system0}
\begin{cases}
-\Delta u_i+ V_i(x)u_i = |u_i|^{2p-2}u_i+\sum\limits_{\substack{j=1\\j\neq i}}^\ell\beta_{ij}|u_j|^p|u_i|^{p-2}u_i, \\
u_i\in H^1(\rn),\quad u_i>0,\qquad i=1,\ldots,\ell,
\end{cases}
\end{equation}
where $N\geq 2$, $p>1$ and $p<\frac{N}{N-2}$ if $N\geq3$, $\beta_{ij}=\beta_{ji}\in\r$, and $V_i\in\cC^0(\rn).$

For the cubic nonlinearity ($p=2$) in dimensions $N=2,3$ this system arises in the study of Bose-Einstein condensation for a mixture of $\ell$ different states which overlap in space. It has been widely studied in the last two decades. Most work has been done in the autonomous case (i.e., for constant $V_i$). We refer the reader to the   recent paper   \cite{liweiwu} where the authors provide an exhaustive list of  references. The non-autonomous case turns out to be much more difficult. Some results have been recently obtained by Peng and Wang \cite{pw}, Pistoia and Vaira \cite{pv}, and Li, Wei and Wu \cite{liweiwu}.

The system \eqref{eq:system0} for a more general subcritical nonlinearity in higher dimensions has been much less studied. Even if it does not have an immediate physical motivation, finding a solution in this general setting is a quite interesting and challenging problem from a mathematical point of view. To our knowledge, the only result so far is that by Gao and Guo \cite{gg} who proved the existence of infinitely many solutions for the system of only two equations ($\ell=2$) when the coupling parameter $\beta_{12}$ is negative, and both equations have a common potential $V_1=V_2$, which does not enjoy any symmetry properties, but satisfies suitable decay assumptions at infinity. However, nothing is said about the sign of the solutions.

Here we study \eqref{eq:system0} in a fully symmetric setting, namely we consider the nonlinear Schrödinger system 
\begin{equation} \label{eq:system}
\begin{cases}
-\Delta u_i+ V(x)u_i = |u_i|^{2p-2}u_i+\beta\sum\limits_{\substack{j=1\\j\neq i}}^\ell|u_j|^p|u_i|^{p-2}u_i, \\
u_i\in H^1(\rn),\quad u_i>0,\qquad i=1,\ldots,\ell,
\end{cases}
\end{equation}
where $N\geq 4$, $1<p<\frac{N}{N-2}$,   $\beta<0$ and $V\in\cC^0(\rn)$  satisfies the following assumptions for some $n\in\n$:
\begin{itemize}
\item[$(V_1)$] $V$ is radial.
\item[$(V_2)$] $0<\inf_{x\in\rn}V(x)$ and $V(x)\to V_\infty>0$ as $|x|\to\infty$.
\item[$(V_3^n)$] There exist $C_0,R_0>0$ and $\lambda\in(0,2\sin\frac{\pi}{\ell n})$ such that
$$V(x)\leq V_\infty-C_0\e^{-\lambda\sqrt{V_\infty}|x|}\qquad\text{for every \ }x\in\rn\text{ \ with \ }|x|\geq R_0.$$
\end{itemize}
We look for  \emph{fully nontrivial} solutions to \eqref{eq:system}, i.e., solutions with all components $u_i$ different from zero.
Set
$$\|u\|^2_V:=\irn(|\nabla u|^2+V(x)u^2).$$
We prove the following results.

\begin{theorem} \label{thm:existence}
Let $n\in\n$ and assume that $V$ satisfies $(V_1)$, $(V_2)$ and $(V_3^n)$. Then the system \eqref{eq:system} has a fully nontrivial solution $\bf u=(u_1,\ldots,u_\ell)$ satisfying
\begin{equation} \label{eq:symmetries}
\begin{cases}
u_1(\mathrm{e}^{2\pi\mathrm{i}/n}z,\theta y)=u_1(z,y)&\text{for every \ } \theta\in O(N-2), \\
u_{j+1}(z,y)=u_1(\mathrm{e}^{2\pi\mathrm{i}j/\ell n}z,y) &\text{for every \ } j=1,\ldots,\ell-1,
\end{cases}
\end{equation}
and every $(z,y)\in\cc\times\r^{N-2}\equiv\rn$. This solution has least energy among all nontrivial solutions satisfying \eqref{eq:symmetries}. Furthermore, the energy of each component satisfies
$$\frac{p-1}{2p}\|u_i\|_V^2<n\,\mathfrak{c}_\infty,$$
where $\mathfrak{c}_\infty$ is the ground state energy of the Schrödinger equation
\begin{equation} \label{eq:limit_problem}
-\Delta u + V_\infty u = |u|^{2p-2}u,\qquad u\in H^1(\rn).
\end{equation}
\end{theorem}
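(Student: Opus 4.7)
The plan is to minimise the energy
$$J(\bf u)=\tfrac{1}{2}\sum_{i=1}^\ell\|u_i\|_V^2-\tfrac{1}{2p}\sum_{i=1}^\ell\irn|u_i|^{2p}-\tfrac{\beta}{2p}\sum_{i\neq j}\irn|u_i|^p|u_j|^p$$
on a symmetric Nehari-type set, with the strict energy bound stated in the theorem serving to rule out loss of compactness at infinity.

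\emph{Step 1: Symmetric variational setup.} Let $\rho$ denote the rotation of $\rn=\cc\times\r^{N-2}$ by $2\pi/(\ell n)$ in the first factor and the identity on the second, and set $\Gamma_1:=\langle\rho^\ell\rangle\times O(N-2)$. Let $\cH$ be the subspace of $(H^1(\rn))^\ell$ of vectors satisfying \eqref{eq:symmetries}; equivalently, $u_1\in H^1_{\Gamma_1}(\rn)$ and $u_{j+1}(x)=u_1(\rho^j x)$ for $j=1,\dots,\ell-1$, so that $\cH$ is naturally identified with $H^1_{\Gamma_1}(\rn)$. The combined action of $\langle\rho\rangle\times O(N-2)$ on $(H^1(\rn))^\ell$ that simultaneously rotates the domain and cyclically permutes the indices leaves $J$ invariant, and $\cH$ is its fixed-point subspace. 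Palais' principle of symmetric criticality therefore reduces the problem to finding critical points of $J|_\cH$.

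\emph{Step 2: Minimisation on the symmetric Nehari set.} Because $\beta<0$, the set
$$\cN_\cH:=\{\bf u\in\cH:u_i\neq 0,\ \partial_iJ(\bf u)[u_i]=0,\ i=1,\dots,\ell\}$$
reduces under the symmetry to a single scalar Nehari identity $\|u_1\|_V^2=\irn|u_1|^{2p}+\beta\sum_{j\neq 1}\irn|u_j|^p|u_1|^p$, solvable by a unique rescaling once the right-hand side is known to be positive (which is checked directly on the concrete test element below). On $\cN_\cH$ one has $J=\frac{p-1}{2p}\sum_i\|u_i\|_V^2=\ell\,\frac{p-1}{2p}\|u_1\|_V^2$; let $c:=\inf_{\cN_\cH}J$. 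Ekeland's principle produces a bounded Palais--Smale sequence at level $c$. A concentration-compactness analysis shows that any loss of compactness comes from bubbles escaping to infinity; by the combined $\langle\rho\rangle\times O(N-2)$-symmetry, such bubbles appear as $\ell n$ rotated copies, contributing at least $\ell n\,\mathfrak{c}_\infty$ to the energy. Hence the strict bound $c<\ell n\,\mathfrak{c}_\infty$ (equivalent to $\frac{p-1}{2p}\|u_1\|_V^2<n\,\mathfrak{c}_\infty$) forces strong convergence to a fully nontrivial minimiser. Replacing each $u_i$ by $|u_i|$ keeps $\bf u$ on $\cN_\cH$ and preserves $J$; the strong maximum principle, applied to the linear equation satisfied by each component, then upgrades non-negativity to strict positivity.

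\emph{Step 3: Sharp energy upper bound.} The crux is a test element in $\cN_\cH$ with $J<\ell n\,\mathfrak{c}_\infty$. Let $U>0$ be the radial ground state of \eqref{eq:limit_problem}, decaying like $A|x|^{-(N-1)/2}\e^{-\sqrt{V_\infty}|x|}$ at infinity. For $R\gg 1$, place translates at the $\ell n$ vertices $\xi_k:=(R\e^{2\pi\i k/(\ell n)},0)\in\cc\times\r^{N-2}$ and set
$$u_1(x):=\sum_{m=0}^{n-1}U(x-\xi_{m\ell}),\qquad u_{j+1}(x):=u_1(\rho^j x),$$
followed by a global rescaling factor $t_R=1+o(1)$ chosen so that $\bf u\in\cN_\cH$. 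By construction $\bf u\in\cH$. A standard expansion based on the exponential asymptotics of $U$ yields
$$\tfrac{p-1}{2p}\|u_i\|_V^2=n\,\mathfrak{c}_\infty-n\irn(V_\infty-V(x))U^2(x-\xi_0)\d x+O\bigl(\e^{-2\sqrt{V_\infty}R\sin(\pi/(\ell n))}\bigr),$$
where the error bundles the self-interaction terms between nearest-neighbour bubbles and the $\beta$-coupling, both controlled by the chord distance $|\xi_0-\xi_1|=2R\sin(\pi/(\ell n))$. Hypothesis $(V_3^n)$ forces the potential gain to satisfy $\irn(V_\infty-V)U^2(\cdot-\xi_0)\d x\geq C\,\e^{-\lambda\sqrt{V_\infty}R}$, and the strict inequality $\lambda<2\sin(\pi/(\ell n))$ makes this the dominant term.

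\emph{Main obstacle.} The heart of the argument is the bookkeeping in Step 3: one must separate the exponential rates of the various interaction terms (self-nonlinearity, cross-$\beta$ term, potential gain), and verify that the Nehari correction $t_R-1$ is of order at most $\e^{-\sqrt{V_\infty}R\sin(\pi/(\ell n))}$ so that it does not destroy the estimate. The precise calibration $\lambda<2\sin(\pi/(\ell n))$ in $(V_3^n)$ is designed exactly to make the potential-induced gain beat the inter-bubble interaction on the nose; any weaker decay assumption on $V$ would reverse the inequality.
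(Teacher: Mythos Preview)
Your overall strategy coincides with the paper's: symmetric criticality, minimisation on the symmetric Nehari set, Ekeland plus concentration--compactness with threshold $\ell n\,\mathfrak{c}_\infty$, and a test element breaking that threshold. The substantive difference is in Step~3. The paper does \emph{not} use the full ground state $U$; it multiplies $U$ by a radial cut-off at radius $\varrho R$ with $\varrho<\sin(\pi/\ell n)$, so that the $\ell n$ translated bumps have pairwise \emph{disjoint supports}. Then every self-interaction between bumps and every $\beta$-coupling vanishes identically, and the only error is the truncation error $O(\e^{-2(1-\eps)\varrho\sqrt{V_\infty}R})$, tuned via $\varrho,\eps$ to beat $\e^{-\lambda\sqrt{V_\infty}R}$. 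This removes all the bookkeeping you flag as the ``main obstacle'', and it yields the stronger inequality $\widehat c^{\phi_n}<\ell n\,\mathfrak{c}_\infty$ for the infimum over \emph{segregated} configurations, which the paper reuses in the phase-separation result.

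Your direct approach also goes through, but two of your stated orders are off. First, $t_R-1$ is of order $\e^{-\lambda\sqrt{V_\infty}R}$, not $\e^{-\sqrt{V_\infty}R\sin(\pi/\ell n)}$: the Nehari correction is driven by the potential term, which is the \emph{largest} perturbation, so it contributes at the same order as the gain rather than below it (the sign still works; the net coefficient of the gain becomes $n/2$, not $n$). Second, the interaction errors are $O(\e^{-2\sqrt{V_\infty}R\sin(\pi/n)})$ for self-interaction within a component and $O(\e^{-2p\sqrt{V_\infty}R\sin(\pi/\ell n)})$ for the $\beta$-coupling; both are $o(\e^{-\lambda\sqrt{V_\infty}R})$ since $\lambda<2\sin(\pi/\ell n)$, so your conclusion stands. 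In Step~2 you should also make explicit that escaping mass can only slide along the $\mathbb{C}$-axis: this is exactly where $N\ge 4$ enters, because $O(N-2)$-orbits in $\mathbb{R}^{N-2}\setminus\{0\}$ are infinite and would otherwise force unbounded energy. The paper isolates this step as a separate lemma.
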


As usual, $O(N-2)$ denotes the group of linear isometries of $\r^{N-2}$. The symmetries \eqref{eq:symmetries} of the solutions given by Theorem \ref{thm:existence} suggests calling them \emph{pinwheel solutions}.

Since the potential $V$ is assumed to be radial, using the compactness of the embedding of the subspace of radial functions in $H^1(\rn)$ into $L^{2p}(\rn)$ and following the argument given in \cite[Theorem 1.1]{csz}, it is easy to see that the system \eqref{eq:system} has a solution all of whose components are radial. Note however that, if $\bf u=(u_1,\ldots,u_\ell)$ satisfies \eqref{eq:system} and \eqref{eq:symmetries} and some component $u_i$ is radial, then $u_1=\cdots=u_\ell=:u$ and $u$ is a nontrivial solution of the equation
$$-\Delta u + V(x)u = (1+(\ell-1)\beta)|u|^{2p-2}u,\qquad u\in H^1(\rn).$$
Therefore, if $1+(\ell-1)\beta\leq 0$, a nontrivial solution to the system \eqref{eq:system} satisfying \eqref{eq:symmetries} cannot be radial. In fact, more can be said. The following result, combined with Theorem \eqref{thm:existence}, yields multiple positive nonradial solutions when the assumption $(V_3^n)$ is satisfied for large enough $n$.

\begin{proposition}\label{prop:symmetry_breaking}
Let $\beta\leq-\frac{1}{\ell-1}$ and, for some $m,q\in\n$, let $\bf u_m,\bf u_q$ be solutions to \eqref{eq:system} satisfying \eqref{eq:symmetries} with $n=\ell^m$ and $n=\ell^q$ respectively. If $m\neq q$, then $\bf u_m\neq\bf u_q$.
\end{proposition}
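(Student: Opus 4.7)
I would argue by contradiction: suppose $\bf u_m=\bf u_q$ with $m<q$ (we may swap the roles). The common tuple then satisfies \eqref{eq:symmetries} with both $n=\ell^m$ and $n=\ell^q$. Evaluating the second line of \eqref{eq:symmetries} at $j=1$ for each choice of $n$ gives two expressions for $u_2$ that must agree:
$$u_1(\e^{2\pi\i/\ell^{m+1}}z,y)=u_1(\e^{2\pi\i/\ell^{q+1}}z,y),$$
so $u_1$ is invariant under the planar rotation by angle $2\pi(\ell^{q-m}-1)/\ell^{q+1}$. Combining this with the invariance under rotation by $2\pi/\ell^q=2\pi\ell/\ell^{q+1}$ coming from the first line of \eqref{eq:symmetries}, and noting that $\gcd(\ell,\ell^{q-m}-1)=1$ (since $\ell^{q-m}-1\equiv-1\pmod{\ell}$), a Bezout identity yields invariance of $u_1$ under rotation by $2\pi/\ell^{q+1}$.

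Feeding this enhanced invariance back into the second line of \eqref{eq:symmetries} with $n=\ell^q$ gives $u_{j+1}(z,y)=u_1(\e^{2\pi\i j/\ell^{q+1}}z,y)=u_1(z,y)$ for every $j=1,\ldots,\ell-1$. Hence all components collapse to a single positive function $u:=u_1=\cdots=u_\ell$, which then solves
$$-\Delta u+V(x)u=(1+(\ell-1)\beta)|u|^{2p-2}u\quad\text{in }\rn.$$
Under the hypothesis $\beta\leq-\tfrac{1}{\ell-1}$ we have $1+(\ell-1)\beta\leq 0$; testing with $u$ and integrating produces $\|u\|_V^2\leq 0$, which forces $u\equiv 0$ and contradicts $u>0$.

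The only non-routine step is the elementary number-theoretic observation that the two visible rotational symmetries of $u_1$ generate the full cyclic group of order $\ell^{q+1}$; everything else is bookkeeping in \eqref{eq:symmetries} together with a one-line energy computation. It is worth noting that the argument uses $n=\ell^m$ and $n=\ell^q$ precisely so that $\ell^{q-m}-1$ is automatically coprime to $\ell$; for generic $n$ one would lose the clean Bezout step.
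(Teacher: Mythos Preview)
Your argument is correct and follows the same overall strategy as the paper: assume the two solutions coincide, show all components collapse to a single function, and reach a contradiction via the energy identity $\|u_1\|_V^2=(1+(\ell-1)\beta)\irn|u_1|^{2p}\leq 0$.

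The paper's route to ``all components coincide'' is slightly shorter than yours and avoids B\'ezout entirely. With $m<q$ one has $\ell^{m+1}\mid\ell^q$, so the first line of \eqref{eq:symmetries} for $n=\ell^q$ already gives
\[
u_1(z,y)=u_1\big(\e^{2\pi\i k/\ell^q}z,y\big)\quad\text{for every integer }k,
\]
and choosing $k=\ell^{q-m-1}j$ yields $u_1(z,y)=u_1\big(\e^{2\pi\i j/\ell^{m+1}}z,y\big)$. The second line of \eqref{eq:symmetries} for $n=\ell^m$ then reads $u_{j+1}(z,y)=u_1\big(\e^{2\pi\i j/\ell^{m+1}}z,y\big)=u_1(z,y)$, and one is done. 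So the structural reason the hypothesis $n=\ell^m,\ell^q$ is used is the \emph{divisibility} $\ell^{m+1}\mid\ell^q$, not the coprimality $\gcd(\ell,\ell^{q-m}-1)=1$ that drives your B\'ezout step. Your detour through the finer invariance by $2\pi/\ell^{q+1}$ is valid but not needed; otherwise the two proofs are the same.
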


One may wonder if the solution given by Theorem \ref{thm:existence} for $\beta\in(-\frac{1}{\ell-1},0)$ is radial or not. The following result gives a partial answer in terms of the nonautonomous Schrödinger equation \eqref{eq:equation}. Namely, if the least energy solutions to this equation that satisfy \eqref{eq:invariance} are nonradial, then the solutions to the system \eqref{eq:system} satisfying \eqref{eq:symmetries} are nonradial for $\beta$ close enough to $0$.

\begin{theorem} \label{thm:beta_to_0}
Let $n\in\n$ and assume that $V$ satisfies $(V_1)$, $(V_2)$ and $(V_3^n)$. Let $\bf u_k=(u_{k,1},\ldots,u_{k,\ell})$ be a least energy fully nontrivial solution to \eqref{eq:system} and \eqref{eq:symmetries} with $\beta=\beta_k$. Assume that $\beta_k<0$ and $\beta_k\to 0$ as $k\to\infty$. Then, after passing to a subsequence, $u_{k,j}\to u_{0,j}$ strongly in $H^1(\rn)$,\, $u_{0,j}\geq 0$,\, $\bf u_0=(u_{0,1},\ldots,u_{0,\ell})$ satisfies \eqref{eq:symmetries}, $u_{0,j}$ is a nontrivial solution to the equation
\begin{equation} \label{eq:equation}
-\Delta u + V(x)u = |u|^{2p-2}u,\qquad u\in H^1(\rn),
\end{equation}
and $u_{0,j}$ has least energy among all solutions to \eqref{eq:equation} satisfying
\begin{equation} \label{eq:invariance}
u(\mathrm{e}^{2\pi\mathrm{i}/n}z,\theta y)=u(z,y)\quad\text{for every \ } \theta\in O(N-2), \ (z,y)\in\rn.
\end{equation}
Furthermore,
$$\frac{p-1}{2p}\|u_{0,j}\|^2_V<n\,\mathfrak{c}_\infty.$$
\end{theorem}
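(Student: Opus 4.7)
The plan is to pass to the weak limit of $\bf u_k$ as $\beta_k\to 0$ and exploit the rigidity imposed by the pinwheel symmetry \eqref{eq:symmetries} together with the strict energy control of Theorem \ref{thm:existence}. Since $\frac{p-1}{2p}\|u_{k,j}\|_V^2<n\mathfrak{c}_\infty$, the sequences $(u_{k,j})_k$ are bounded in $H^1(\rn)$; testing the $j$-th equation of \eqref{eq:system} with $u_{k,j}$, using $\beta_k<0$ and $u_{k,i}>0$, and invoking Sobolev's inequality also produces a uniform positive lower bound on $\|u_{k,j}\|_V$. Passing to a subsequence, $u_{k,j}\rh u_{0,j}$ weakly in $H^1(\rn)$, strongly in $L^{2p}_{\mathrm{loc}}(\rn)$ and pointwise a.e. The limit is nonnegative and $\bf u_0$ satisfies \eqref{eq:symmetries}, so in particular each $u_{0,j}$ obeys \eqref{eq:invariance}. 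Because $\beta_k\to 0$ and $|u_{k,i}|^p|u_{k,j}|^{p-2}u_{k,j}$ is bounded in $L^{2p/(2p-1)}(\rn)$, passing to the distributional limit in the $j$-th equation shows that $u_{0,j}$ is a weak solution of \eqref{eq:equation}.

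The main step is to exclude loss of mass at infinity. Suppose, for contradiction, that $u_{k,1}-u_{0,1}$ does not tend to zero strongly in $L^{2p}(\rn)$. A concentration-compactness/profile decomposition then furnishes centers $\xi_k$ with $|\xi_k|\to\infty$ and a nontrivial nonnegative profile $w$, solution of the autonomous limit problem \eqref{eq:limit_problem}, with $u_{k,1}(\cdot+\xi_k)\rh w$; its energy is at least $\mathfrak{c}_\infty$. By the $\z/n\z\times O(N-2)$-invariance of $u_{k,1}$ the profile actually lives on the whole orbit of $\xi_k$; the $L^{2p}$-mass would blow up if the $y$-component of $\xi_k$ escaped to infinity, since then the orbit would fill an $O(N-2)$-sphere of diverging radius, so the $y$-component remains bounded and the orbit reduces to the $n$ points obtained by rotating the $\cc$-component through the $n$-th roots of unity. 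The relation $u_{k,j+1}(z,y)=u_{k,1}(\e^{2\pi\i j/\ell n}z,y)$ in \eqref{eq:symmetries} then spreads these bubbles into $\ell n$ pairwise distinct positions across all components, since the integers $m\ell-j$ with $0\leq m<n$, $0\leq j<\ell$ are all distinct modulo $\ell n$. Consequently
\[
\tfrac{p-1}{2p}\sum_{j=1}^{\ell}\|u_{k,j}\|_V^2 \;\geq\; \ell\,J(u_{0,1})+\ell n\,\mathfrak{c}_\infty+o(1),
\]
contradicting the strict bound $\frac{p-1}{2p}\sum_j\|u_{k,j}\|_V^2<\ell n\,\mathfrak{c}_\infty$ from Theorem \ref{thm:existence}. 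Hence $u_{k,j}\to u_{0,j}$ in $L^{2p}(\rn)$; the Nehari identity $\|u_{k,j}\|_V^2=\irn|u_{k,j}|^{2p}+\beta_k\sum_{i\neq j}\irn|u_{k,i}|^p|u_{k,j}|^p$ then forces $\|u_{k,j}\|_V\to\|u_{0,j}\|_V$, upgrading weak to strong $H^1$-convergence and, thanks to the uniform lower bound above, ruling out $u_{0,j}\equiv 0$.

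To identify $u_{0,j}$ as a least-energy solution, let $c_*$ denote the infimum of $J$ among nontrivial solutions of \eqref{eq:equation} satisfying \eqref{eq:invariance}. I would take a minimizing sequence $\tilde u_m$ with $J(\tilde u_m)\to c_*$, build $\tilde{\bf u}_m$ from $\tilde u_m$ by the pinwheel rotations of \eqref{eq:symmetries}, and project onto the symmetric Nehari set of \eqref{eq:system} with $\beta=\beta_k$ by the unique scaling $t_k^{(m)}>0$; since $\beta_k\to 0$, $t_k^{(m)}\to 1$ for each fixed $m$, and the minimality of $\bf u_k$ yields $\limsup_k\frac{p-1}{2p}\sum_j\|u_{k,j}\|_V^2\leq \ell J(\tilde u_m)$. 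Combined with the strong convergence of the previous paragraph and then letting $m\to\infty$ this gives $J(u_{0,j})\leq c_*$, while the reverse inequality is automatic since $u_{0,j}$ itself is admissible. The strict bound $\frac{p-1}{2p}\|u_{0,j}\|_V^2<n\mathfrak{c}_\infty$ is the scalar analogue of Theorem \ref{thm:existence} for \eqref{eq:equation} in the symmetry class \eqref{eq:invariance}, obtained by the same test-function construction under $(V_3^n)$ using $n$ suitably separated translates of the standard ground state of \eqref{eq:limit_problem}. The hard part is the equivariant concentration analysis of the second paragraph, where the multiplicity $\ell n$ of bubbles at infinity forced by the pinwheel symmetry is exactly calibrated to the threshold $\ell n\,\mathfrak{c}_\infty$ supplied by $(V_3^n)$ and Theorem \ref{thm:existence}.
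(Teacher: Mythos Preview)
Your argument is correct and rests on the same two pillars as the paper: the equivariant concentration analysis (any escaping profile is replicated $n$ times by the $\z_n$-symmetry, forcing energy at least $\ell n\mathfrak{c}_\infty$ in contradiction with the strict bound $c_k^{\phi_n}\leq\widehat c^{\phi_n}<\ell n\mathfrak{c}_\infty$ of Lemma~\ref{lem:energy_estimate}), and the projection of a scalar near-minimizer onto $\cN_k^{\phi_n}$ to compare $c_k^{\phi_n}$ with $\ell\mathfrak{c}^{G_n}$. The organization differs slightly: you run a full profile decomposition (weak limit plus bubbles) to obtain strong $L^{2p}$ and then $H^1$ convergence directly, and only afterwards verify the least-energy property; the paper instead first shows merely that the weak limit is nonzero, and then gets strong convergence and the least-energy identification simultaneously from the sandwich $\mathfrak{c}^{G_n}\leq\frac{p-1}{2p}\|u_{0,i}\|_V^2\leq\liminf_k\frac{1}{\ell}c_k^{\phi_n}\leq\mathfrak{c}^{G_n}$. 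The paper's route is a bit more economical; yours requires a sharper Brezis--Lieb splitting but reaches the same conclusions. One small remark: the final strict inequality $\frac{p-1}{2p}\|u_{0,j}\|_V^2<n\mathfrak{c}_\infty$ does not need a separate scalar test-function construction --- it follows at once from strong convergence together with the uniform bound $c_k^{\phi_n}\leq\widehat c^{\phi_n}<\ell n\mathfrak{c}_\infty$ already established in Lemma~\ref{lem:energy_estimate}.
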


Next, we describe the behavior of the solutions given by Theorem \ref{thm:existence} as $\beta\to -\infty$. As shown by Conti, Terracini and Verzini \cite{ctv1, ctv2} and Chang, Lin, Lin and Lin \cite{clll}, there is a connection between variational elliptic systems with strong competitive interaction and optimal partition problems.

We shall call an $\ell$-tuple $(\o_1,\ldots,\o_\ell)$ of nonempty open subsets of $\rn$ an \emph{$(n,\ell)$-pinwheel partition} of $\rn$ if $\o_i\cap\o_j=\emptyset$ whenever $i\neq j$ and it satisfies following two symmetry conditions: 
\begin{itemize}
\item[$(S_1)$] $\o_{j+1}=\{(z,y)\in\cc\times\r^{N-2}:(\mathrm{e}^{2\pi\mathrm{i}j/\ell n}z,y)\in\o_1\}$ for each $j=1,\ldots,\ell-1$.
\item[$(S_2)$] If $(z,y)\in\o_1$ then $(\mathrm{e}^{2\pi\mathrm{i}/n}z,\theta y)\in\o_1$ for every $\theta\in O(N-2)$.
\end{itemize}
We denote the set of all $(n,\ell)$-pinwheel partitions by $\cP^{n}_\ell$. \ If $\o$ is an open subset of $\rn$ satisfying $(S_2)$, a minimizer for
$$\inf_{u\in\cM_{\o}}\frac{p-1}{2p}\|u\|_V^2=:\mathfrak{c}_{\o}$$
on the Nehari manifold
\begin{align} \label{eq:bdd_nehari}
\cM_{\o}:=&\{u\in H^1_0(\o):u\neq 0, \ \|u\|_V^2=\irn|u|^{2p},\text{ \ and \ } \\ 
&u(\mathrm{e}^{2\pi\mathrm{i}/n}z,\theta y)=u(z,y)\text{ \ for all \ } \theta\in O(N-2)\text{ \ and \ }(z,y)\in\o\}, \nonumber
\end{align}
is a \emph{least energy solution} to the problem
\begin{equation} \label{eq:equation_bdd}
\begin{cases}
-\Delta u + V(x)u = |u|^{2p-2}u,\qquad u\in H_0^1(\o),\\
u(\mathrm{e}^{2\pi\mathrm{i}/n}z,\theta y)=u(z,y)\quad\text{for every \ } \theta\in O(N-2), \ (z,y)\in\o.
\end{cases}
\end{equation}
We say that $(\o_1,\ldots,\o_\ell)$ is an \emph{optimal $(n,\ell)$-pinwheel partition} for equation \eqref{eq:equation} if $\mathfrak{c}_{\o_j}$ is attained on $\cM_{\o_j}$ and
$$\sum_{j=1}^\ell\mathfrak{c}_{\o_j}=\inf_{(\t_1,\ldots,\t_\ell)\in\cP^{n}_\ell}\sum_{j=1}^\ell\mathfrak{c}_{\t_j}.$$

\begin{theorem} \label{thm:partition}
Let $n\in\n$ and assume that $V$ satisfies $(V_1)$, $(V_2)$ and $(V_3^n)$. Let $\bf u_k=(u_{k,1},\ldots,u_{k,\ell})$ be a least energy fully nontrivial solution to \eqref{eq:system} and \eqref{eq:symmetries} with $\beta=\beta_k$. Assume that $\beta_k\to -\infty$ as $k\to\infty$. Then, after passing to a subsequence, 
\begin{itemize}
\item[$(i)$] $u_{k,j}\to u_{\infty,j}$ strongly in $H^1(\rn)$,\, $u_{\infty,j}\geq 0$,\, $u_{\infty,j}\neq 0$,\, $u_{\infty,i}u_{\infty,j}=0$ if $i\neq j$,\, $\bf u_\infty=(u_{\infty,1},\ldots,u_{\infty,\ell})$ satisfies \eqref{eq:symmetries}, and
\begin{equation*}
\irn \beta_k u_{k,j}^p u_{k,i}^p\to 0 \text{ as } k\to \infty\quad \text{whenever } i\neq j.
\end{equation*}
\item[$(ii)$] $u_{\infty,j}\in\cC^0(\rn)$,\, the restriction of $u_{\infty,j}$ to the open set $\o_j:=\{x\in\rn:u_{\infty,j}(x)>0\}$ is a least energy solution to the problem \eqref{eq:equation_bdd} in $\o_j$, and $(\o_1,\ldots,\o_\ell)$ is an optimal $(n,\ell)$-pinwheel partition for equation \eqref{eq:equation}.
\item[$(iii)$] $\rn\smallsetminus\bigcup_{j=1}^\ell\Omega_j=\mathscr R\cup\mathscr S$, where $\mathscr R\cap\mathscr S=\emptyset$, $\mathscr R$ is an  $(m-1)$-dimensional $\cC^{1,\alpha}$-submanifold of $\rn$ and $\mathscr S$ is a closed subset of $\rn$ with Hausdorff measure $\leq m-2$. Furthermore, if $\xi\in\mathscr R$, there exist $i,j$ such that
$$\lim_{x\to\xi^+}|\nabla u_i(x)|=\lim_{x\to\xi^-}|\nabla u_j(x)|\neq 0,$$
where $x\to\xi^\pm$ are the limits taken from opposite sides of $\mathscr R$ and, if $\xi\in\mathscr S$, then
$$\lim_{x\to\xi}|\nabla u_j(x)|=0\qquad\text{for every \ }j=1,\ldots,\ell.$$
\item[$(iv)$] If $\ell=2$, then \ $u_{\infty,1}-u_{\infty,2}$ \ is a sign-changing solution to equation \eqref{eq:equation} satisfying \eqref{eq:invariance}.
\end{itemize}
\end{theorem}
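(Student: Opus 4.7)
The plan is to follow the now-classical segregation scheme of Conti--Terracini--Verzini and Noris--Tavares--Terracini--Verzini, carried out inside the pinwheel-symmetric class so that the limiting partition automatically lies in $\cP^n_\ell$. Theorem \ref{thm:existence} supplies the bound $\frac{p-1}{2p}\|u_{k,j}\|_V^2<n\,\mathfrak{c}_\infty$, uniform in $\beta_k$; hence $(u_{k,j})_k$ is bounded in $H^1(\rn)$ and a subsequence converges weakly, pointwise a.e., and locally strongly in $L^q$ for $q<2^*$ to some $u_{\infty,j}\geq 0$. Testing the $j$-th equation of \eqref{eq:system} against $u_{k,j}$ yields
\begin{equation*}
\|u_{k,j}\|_V^2=\irn u_{k,j}^{2p}+\beta_k\sum_{i\neq j}\irn u_{k,j}^p u_{k,i}^p,
\end{equation*}
from which $\irn u_{k,j}^{2p}\leq C$ and, since the two summands on the right carry opposite signs, $|\beta_k|\irn u_{k,i}^p u_{k,j}^p\leq C$. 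In particular $\irn u_{k,i}^p u_{k,j}^p\to 0$ and Fatou forces $u_{\infty,i}u_{\infty,j}=0$ a.e., which is the segregation half of $(i)$. The Sobolev--Nehari lower bound $\|u_{k,j}\|_V^{2p-2}\geq c>0$ and the pinwheel symmetries \eqref{eq:symmetries} pass to the limit.

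To upgrade to strong $H^1$-convergence and continuity of the limits, I would combine a Brezis--Kato/Moser bootstrap, based on the subsolution inequality $-\Delta u_{k,j}+V u_{k,j}\leq u_{k,j}^{2p-1}$, with the uniform $C^{0,\alpha}$ estimates for competitive systems of Noris--Tavares--Terracini--Verzini, and use the coercivity of $V$ from $(V_2)$ to rule out loss of mass at infinity by a standard concentration-compactness splitting. Strong convergence turns the Nehari identity above into
$$\beta_k\sum_{i\neq j}\irn u_{k,j}^p u_{k,i}^p\longrightarrow\|u_{\infty,j}\|_V^2-\irn u_{\infty,j}^{2p}.$$
Uniform local convergence makes $\o_j:=\{u_{\infty,j}>0\}$ open, and testing the equation against $\vp\in C_c^\infty(\o_j)$, using that $u_{k,i}\to 0$ uniformly on $\supp\vp$ for $i\neq j$, shows $u_{\infty,j}$ weakly solves \eqref{eq:equation_bdd} in $\o_j$. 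Its Nehari identity forces the right-hand side above to vanish, finishing $(i)$, and also gives $u_{\infty,j}\neq 0$ together with the equation in the first half of $(ii)$.

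For the optimality in $(ii)$ I would run a two-sided comparison inside $\cP^n_\ell$. Given any $(\t_1,\ldots,\t_\ell)\in\cP^n_\ell$ for which each $\mathfrak c_{\t_j}$ is attained at some $v_j\in\cM_{\t_j}$, the tuple $(v_1,\ldots,v_\ell)$ has pairwise disjoint supports and inherits the pinwheel symmetries \eqref{eq:symmetries} from those of the $\t_j$, so it lies in the symmetric Nehari set for \eqref{eq:system} with no interaction contribution; the least-energy level $c_{\beta_k}$ of that system therefore obeys $c_{\beta_k}\leq\sum_j\mathfrak c_{\t_j}$ for every $k$. Passing $k\to\infty$ with strong convergence and segregation gives
$$\sum_{j=1}^\ell\mathfrak c_{\o_j}\leq\sum_{j=1}^\ell\frac{p-1}{2p}\|u_{\infty,j}\|_V^2=\lim_{k\to\infty}c_{\beta_k}\leq\sum_{j=1}^\ell\mathfrak c_{\t_j},$$
so $(\o_1,\ldots,\o_\ell)$ is optimal and, choosing $\t_j=\o_j$, each $u_{\infty,j}|_{\o_j}$ minimizes on $\cM_{\o_j}$.

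Part $(iii)$ is the free-boundary regularity theorem for limits of strongly competitive systems, which I would cite directly from the Conti--Terracini--Verzini / Tavares--Terracini / Soave--Zilio line of works, applied to $\bf u_\infty$ (the variational structure it requires is exactly what was verified in $(ii)$). For $(iv)$ with $\ell=2$, on each $\o_j$ the function $w:=u_{\infty,1}-u_{\infty,2}$ satisfies $-\Delta w+Vw=|w|^{2p-2}w$; the gradient-matching relation from $(iii)$ makes $w$ of class $C^1$ across the regular part $\mathscr R$ of the interface, and $\mathscr S$, being of codimension at least two, is $H^1$-capacity-negligible, so $w$ is a distributional, hence weak, solution on all of $\rn$. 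The invariance \eqref{eq:invariance} follows by substituting \eqref{eq:symmetries} with $\ell=2$ into the definition of $w$. The hardest step is the uniform regularity / strong-convergence bundle in the second paragraph; once that machinery is imported, the pinwheel symmetry makes the comparison in $(ii)$ and the gluing in $(iv)$ noticeably cleaner than in the general non-symmetric setting.
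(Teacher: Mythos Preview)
Your overall architecture is right and largely parallels the paper's, but there is a genuine gap in the compactness step. You write that you will ``use the coercivity of $V$ from $(V_2)$ to rule out loss of mass at infinity''. Assumption $(V_2)$ does \emph{not} make $V$ coercive: it only says $V$ is bounded below away from zero and $V(x)\to V_\infty$ with $V_\infty$ finite. So the embedding $H^1(\rn)\hookrightarrow L^{2p}(\rn)$ is not compact even on the $G_n$-invariant subspace; mass can escape along the $\cc$-direction. The paper's mechanism for ruling out escape is not coercivity but the strict energy inequality: since $\cW^n_\ell\subset\cN_k^{\phi_n}$ for every $k$, Lemma~\ref{lem:energy_estimate} (this is where $(V_3^n)$ is used) gives the uniform bound $c_k^{\phi_n}\leq\widehat{c}^{\phi_n}<\ell n\,\mathfrak{c}_\infty$. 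Then the symmetry-adapted splitting of Lemma~\ref{lem:orbits} and the argument of Theorem~\ref{thm:splitting} (reproduced in the proof of Theorem~\ref{thm:beta_to_0}) show that if $u_{\infty,j}=0$ one can translate along $\cc\times\{0\}$ and extract a nontrivial $O(N-2)$-invariant profile for the limit problem \eqref{eq:limit_problem}, forcing $\frac{p-1}{2p}\|u_{k,j}\|_V^2\geq n\,\mathfrak{c}_\infty$ and contradicting the strict bound. You quote the inequality $\frac{p-1}{2p}\|u_{k,j}\|_V^2<n\,\mathfrak{c}_\infty$ from Theorem~\ref{thm:existence} at the outset but use it only for boundedness; it is precisely this strict inequality, not any coercivity, that buys compactness.

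Two further remarks on differences of approach. First, the paper obtains strong $H^1$-convergence not through regularity but by a Nehari squeeze: once $u_{\infty,j}\neq 0$ and $\|u_{\infty,j}\|_V^2\leq|u_{\infty,j}|_{2p}^{2p}$ (from testing with $u_{\infty,j}$ and dropping the nonpositive interaction), one rescales $\bf u_\infty$ into $\cW^n_\ell$, compares with $\widehat{c}^{\phi_n}$, and forces equality of norms. This simultaneously shows $u_{\infty,j}\in\cM_{\o_j}$ is a \emph{minimizer}, hence a solution, avoiding your passage to the limit in the equation on $\o_j$; that step is more delicate than you indicate, since ``$u_{k,i}\to 0$ uniformly on $\supp\vp$'' alone does not kill a term multiplied by $\beta_k\to-\infty$. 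Second, for $(iv)$ the paper takes a purely variational route: the map $(u_1,u_2)\mapsto u_1-u_2$ is a bijection $\cW^n_2\to\cM^{\tau_n}$ preserving the energy, so $u_{\infty,1}-u_{\infty,2}$ is a least energy point of $\cM^{\tau_n}$ and hence a solution of \eqref{eq:equation}. Your gluing argument via the reflection law of $(iii)$ is also valid, but heavier.
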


Note that $(iii)$ implies that the partition exhausts $\rn$, i.e., $\rn=\bigcup_{j=1}^\ell\overline{\o}_j$. Therefore, every $\o_j$ is unbounded.

The regularity properties of optimal partitions have been established, in different settings, for instance in \cite{cl, cpt, nttv, sttz, tt} and some of the references therein. 

Theorem \ref{thm:partition} establishes the existence of optimal partitions having an additional property: each set of the partition is obtained from any other by means of a linear isometry. Pinwheel partitions are an example of this type of partitions, but others are conceivable. In Section \ref{sec:preliminaries} we present a general symmetric variational setting for the system \eqref{eq:system} that produces other examples.

The existence of sign-changing solutions to equation \eqref{eq:equation} having the additional property that their negative part is obtained from the positive one by means of a linear isometry and a change of sign has been established in \cite{cs}. This includes those given by Theorem \ref{thm:partition}$(iv)$. The tool for producing this type of solutions is a homomorphism from some group of linear isometries of $\rn$ onto the group with two elements. As shown is Section \ref{sec:preliminaries} this tool also serves to get positive solutions of the system \eqref{eq:system} for $\ell=2$ with the property that each component is obtained from the other by composition with a linear isometry. The general tool for obtaining a similar result for the system \eqref{eq:system} of $\ell$ equations is a homomorphism into the group of permutations of a set of $\ell$ elements.

Rather than search for results in the general setting of Section \ref{sec:preliminaries}, we decided, for the sake of clarity, to look at pinwheel solutions only. The solutions found in \cite{pw,pv} for $N=2,3$ and $p=2$ were of this type. Peng and Wang \cite{pw} focused on the case where the potential $V$ is greater than its limit at infinity and, for a system of two equations, they established the existence of pinwheel solutions for $\beta$ sufficiently negative. Pistoia and Vaira raised the question of whether solutions exist when $V$ is below its limit at infinity and showed in \cite{pv} that the system \eqref{eq:system} has a solution satisfying \eqref{eq:symmetries} for $\beta$ close enough to $0$. The energy of each component approaches $n\mathfrak{c}_\infty$ as $\beta\to 0$. 

Our results can be easily extended to dimension $N=2$. In contrast, the dimension $N=3$ requires a more delicate analysis because compactness can also be lost by the presence of solutions to the autonomous system (with $V=V_\infty$) that travel to infinity; see Remark \ref{rem:N=3}.

The paper is organized as follows. In Section \ref{sec:preliminaries} we present the general variational framework and in Section \ref{sec3} we study the behavior of minimizing sequences of pinwheel solutions for the system \eqref{eq:system}. In Section \ref{sec4} we prove Theorem \ref{thm:existence} and Proposition \ref{prop:symmetry_breaking}. Section \ref{sec5} is devoted to the proofs of Theorems \ref{thm:beta_to_0} and \ref{thm:partition}.

\section{The symmetric variational setting}
\label{sec:preliminaries}

Let $G$ be a closed subgroup of the group $O(N)$ of linear isometries of $\rn$, and for $\ell\geq 2$ let $S_\ell$ be the group of permutations of the set $\{1,\ldots,\ell\}$ acting on $\r^\ell$ in the obvious way, i.e.,
$$\sigma(u_1,\ldots,u_\ell)=(u_{\sigma(1)},\ldots,u_{\sigma(\ell)})\text{ \ for every \ }\sigma\in S_\ell, \ (u_1,\ldots,u_\ell)\in\r^\ell.$$
Let $\phi:G\to S_\ell$ be a continuous homomorphism of groups.
A function $\bf u:\rn\to\r^\ell$ will be called \emph{$\phi$-equivariant} if
\begin{equation}\label{eq:equivariant}
\bf u(gx)=\phi(g)\bf u(x) \text{ \ for all \ }g\in G, \ x\in\rn.
\end{equation}
Note that, if $\bf u:\rn\to\r^\ell$ is $\phi$-equivariant, then $\bf u$ is $K_\phi$-invariant, where $K_\phi:=\ker(\phi)$.

These data define a $G$-action on $\cH:=(H^1(\rn))^\ell$ as follows:
$$(g\bf u)(x):=\phi(g)\bf u(g^{-1}x)\text{ \ for every \ }g\in G, \ \bf u=(u_1,\ldots,u_\ell)\in\cH.$$
For $u,v\in H_0^1(\rn)$  we set 
$$\langle u,v\rangle_V:=\irn(\nabla u\cdot\nabla v+V(x)uv)\qquad\text{and}\qquad\|u\|_V:=\sqrt{\langle u,u\rangle_V}.$$
The solutions to the system \eqref{eq:system} are the positive critical points of the functional $\cJ:\cH\to\r$ given by 
$$\cJ(\bf u):= \frac{1}{2}\sum_{i=1}^\ell\|u_i\|_V^2 - \frac{1}{2p}\sum_{i=1}^\ell\irn |u_i|^{2p}-\frac{\beta}{2p}\sum_{\substack{i,j=1 \\ i\neq j}}^\ell\irn |u_i|^p|u_j|^p,$$
which is of class $\cC^1$. Its $i\text{-th}$ partial derivative is
$$\partial_i\cJ(\bf u)v=\langle u_i,v\rangle_V-\irn|u_i|^{2p-2}u_iv-\beta\sum\limits_{\substack{j=1\\j\neq i}}^\ell\irn|u_j|^p|u_i|^{p-2}u_iv$$
for any $\bf u\in\cH, \ v\in H^1(\rn)$. The functional $\cJ$ is $G$-invariant, i.e.,
$$\cJ(g\bf u)=\cJ(\bf u)\quad\text{ \ for every \ }g\in G, \ \bf u=(u_1,\ldots,u_\ell)\in\cH.$$
So, by the principle of symmetric criticality \cite[Theorem 1.28]{w}, the critical points of the restriction of $\cJ$ to the $G$-fixed point space of $\cH$,
$$\cH^\phi:=\{\bf u\in\cH:g\bf u=\bf u \ \forall g\in G\}=\{\bf u\in\cH:\bf u\text{ is }\phi\text{-equivariant}\},$$
are critical points of $\cJ$, i.e., they are the solutions to the system \eqref{eq:system} satisfying \eqref{eq:equivariant}. We denote by $\cJ^\phi$ the restriction of $\cJ$ to $\cH^\phi$. Note that
\begin{align*}
(\cJ^\phi)'(\bf u)\bf v=\cJ'(\bf u)\bf v=\sum_{i=1}^\ell\partial_i\cJ(\bf u)v_i\qquad\text{for any \ }\bf u,\bf v\in\cH^\phi.
\end{align*}
The fully nontrivial critical points of $\cJ^\phi$ belong to the set
\begin{align*}
\cN^\phi:=\{\bf u\in\cH^\phi:u_i\neq 0, \ \partial_{i}\cJ(\bf u)u_i=0 \ \forall i=1,\ldots,\ell\}.
\end{align*}
Observe that
$$\cJ^\phi(\bf u)= \frac{p-1}{2p}\sum_{i=1}^\ell\|u_i\|_V^2\qquad\text{if \ }\bf u\in\cN^\phi.$$
Set
\begin{equation*}
c^\phi:= \inf_{\bf u\in \cN^\phi}\cJ^\phi(\bf u).
\end{equation*}

We consider also the single equation
\begin{equation} \label{eq:G-equation}
-\Delta u + V(x)u = |u|^{2p-2}u,\qquad u\in H^1(\rn)^{G},
\end{equation}
where \ $H^1(\rn)^G:=\{u\in H^1(\rn):u\text{ is }G\text{-invariant}\}$, and we denote by $J:H^1(\rn)^{G}\to\r$ and $\cM^{G}$ energy functional and the Nehari manifold associated to it, i.e.,
\begin{equation}\label{eq:J}
J(u):=\frac{1}{2}\|u\|_V^2 - \frac{1}{2p}\irn |u|^{2p}
\end{equation}
and
$$\cM^G:=\left\{u\in H^1(\rn)^G:u\neq 0, \ \|u\|_V^2 =\irn |u|^{2p}\right\}.$$
Similarly, we denote by $J_\infty:H^1(\rn)\to\r$ and $\cM_\infty$ be the energy functional and the Nehari manifold associated to \eqref{eq:limit_problem}. Set 
\begin{equation}\label{eq:frak_c}
\mathfrak{c}_\infty:=\inf_{u\in\cM_\infty}J_\infty(u)\qquad\text{and}\qquad\mathfrak{c}^{G}:=\inf_{u\in\cM^{G}}J(u).
\end{equation}

We shall focus our attention on the following example.

\begin{example} \label{example}
Let $\z_{m}:=\{\mathrm{e}^{2\pi\mathrm{i}j/m}:j=0,\ldots,m-1\}$ act on $\cc$ by complex multiplication and $G_m:=\z_m\times O(N-2)$ act on $\rn$ as
\begin{align*}
\alpha x:=&(\alpha z,y)&&\forall \alpha\in\z_m, \\
\theta x:=&(z,\theta y)&&\forall \theta\in O(N-2), \quad x=(z,y)\in\cc\times\r^{N-2}\equiv\rn.
\end{align*}
Let $\sigma_1\in S_\ell$ be the cyclic permutation \ $\sigma_1(i):=i+1 \mod\ell$, and $\phi_n:G_{\ell n}\to S_\ell$ be the homomorphism given by \ $\phi_n(\mathrm{e}^{2\pi\mathrm{i}/\ell n},\theta):=\sigma_1$ \ for any $\theta\in O(N-2)$. Then $\bf u:\rn\to\r^\ell$ is $\phi_n$-equivariant iff 
$$(u_1(\mathrm{e}^{2\pi\mathrm{i}/\ell n}z,\theta y),\ldots,u_\ell(\mathrm{e}^{2\pi\mathrm{i}/\ell n}z,\theta y))=(u_2(z,y),\ldots,u_\ell(z,y),u_1(z,y))$$
for every $(z,y)\in\cc\times\r^{N-2}$ and $\theta\in O(N-2)$, i.e., iff \eqref{eq:symmetries} hold true. Note that every $u_j$ is $G_n$-invariant.
\end{example}

\section{The behavior of minimizing sequences}
\label{sec3}

From now on we fix $n$ and we take $G_n$ and $\phi_n:G_{\ell n}\to S_\ell$ as in Example \ref{example}. Then, for any $\bf u,\bf v\in\cH^{\phi_n}$,
\begin{align} \label{eq:partial}
(\cJ^{\phi_n})'(\bf u)\bf v=\sum_{i=1}^\ell\partial_i\cJ(\bf u)v_i=\ell\,\partial_j\cJ(\bf u)v_j\qquad\text{for any \ }j=1,\ldots,\ell,
\end{align}
and the set $\cN^{\phi_n}$ is the usual Nehari manifold associated to the functional $\cJ^{\phi_n}:\cH^{\phi_n}\to\r$, i.e.,
\begin{align*}
\cN^{\phi_n}=\{\bf u\in\cH^{\phi_n}:\bf u\neq 0, \ (\cJ^{\phi_n})'(\bf u)\bf u=0\}.
\end{align*}
It has the following properties.

\begin{proposition} \label{prop:nehari}
\begin{itemize}
\item[$(a)$] $\cN^{\phi_n}\neq\emptyset$. 
\item[$(b)$] $c^{\phi_n}\geq\ell\mathfrak{c}^{G_n}>0$.
\item[$(c)$] $\cN^{\phi_n}$ is a closed $\cC^1$-submanifold of codimension $1$ of $\cH^{\phi_n}$, and a natural constraint for $\cJ^{\phi_n}$.
\item[$(d)$] If $\bf u\in\cH^{\phi_n}$ is such that, for each $i=1,\ldots,\ell$,
\begin{equation*}
\irn|u_i|^{2p} + \sum_{\substack{j=1 \\ j\neq i}}^\ell\beta\irn|u_i|^p|u_j|^p>0,
\end{equation*}
then there exists a unique $s_{\bf u} \in (0,\infty)$ such that \ $s_{\bf u}\bf u\in \cN^{\phi_n}$. Furthermore,  
$$\cJ^{\phi_n}(s_{\bf u}\bf u) = \max_{s\in (0,\infty)}\cJ^{\phi_n}(s \bf u).$$
\item[$(e)$] $c^{\phi_n}\leq \ell n\mathfrak{c_\infty}$.
\end{itemize}
\end{proposition}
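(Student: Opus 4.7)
The central tool is (d), which I would prove first. For $\bf u\in\cH^{\phi_n}$ set $h(s):=\cJ^{\phi_n}(s\bf u)=\frac{s^2}{2}\sum_{i=1}^\ell\|u_i\|_V^2-\frac{s^{2p}}{2p}B(\bf u)$, where $B(\bf u):=\sum_i\bigl(\irn|u_i|^{2p}+\sum_{j\neq i}\beta\irn|u_i|^p|u_j|^p\bigr)$. The hypothesis forces each summand of $B$ to be positive, so $B(\bf u)>0$; since $2p>2$, $h$ attains a unique positive maximum at
$$s_{\bf u}=\left(\frac{\sum_i\|u_i\|_V^2}{B(\bf u)}\right)^{1/(2p-2)},$$
and $s_{\bf u}\bf u\in\cN^{\phi_n}$ because $(\cJ^{\phi_n})'(s_{\bf u}\bf u)(s_{\bf u}\bf u)=s_{\bf u}h'(s_{\bf u})=0$.

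For (a) I would choose a nonnegative $w_1\in H^1(\rn)^{G_n}$ supported in a $G_n$-invariant union of $n$ angular sectors of width less than $2\pi/(\ell n)$, one around each direction $\arg z=2\pi k/n$. The components $w_{j+1}(z,y):=w_1(\e^{2\pi\i j/(\ell n)}z,y)$ then have $\ell n$ pairwise disjoint, equally spaced supports, all cross-terms vanish, and (d) applies. For (c): with $G(\bf u):=(\cJ^{\phi_n})'(\bf u)\bf u$, a direct calculation gives $G'(\bf u)\bf u=(2-2p)\sum_i\|u_i\|_V^2<0$ on $\cN^{\phi_n}$, so $\cN^{\phi_n}$ is a $\cC^1$ submanifold of codimension $1$; the Lagrange multiplier at any constrained critical point then vanishes, giving naturality.

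For (b): if $\bf u\in\cN^{\phi_n}$, \eqref{eq:partial} gives $\partial_i\cJ(\bf u)u_i=0$, equivalently $\|u_i\|_V^2=\irn|u_i|^{2p}+\sum_{j\neq i}\beta\irn|u_i|^p|u_j|^p\leq\irn|u_i|^{2p}$ since $\beta<0$. Because $K_{\phi_n}=G_n$, each $u_i$ is $G_n$-invariant, so there exists $t_i\in(0,1]$ with $t_iu_i\in\cM^{G_n}$; then $\frac{p-1}{2p}\|u_i\|_V^2\geq\frac{p-1}{2p}t_i^2\|u_i\|_V^2=J(t_iu_i)\geq\mathfrak{c}^{G_n}$, and summing over $i$ yields $c^{\phi_n}\geq\ell\mathfrak{c}^{G_n}$. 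Positivity $\mathfrak{c}^{G_n}>0$ is the standard Sobolev lower bound on $\cM^{G_n}$.

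The main obstacle is (e), which I would attack by a concentration construction. Let $\omega$ be a positive radial ground state of \eqref{eq:limit_problem}. For $R>0$ large and $P_k:=(R\e^{2\pi\i k/n},0)\in\cc\times\r^{N-2}$, put
$$u_1^R(x):=\sum_{k=0}^{n-1}\omega(x-P_k),\qquad u_{j+1}^R(z,y):=u_1^R(\e^{2\pi\i j/(\ell n)}z,y).$$
Radiality of $\omega$ together with the placement of the $P_k$ in the $z$-plane makes $u_1^R\in H^1(\rn)^{G_n}$, and the defining formula makes $\bf w^R:=(u_1^R,\ldots,u_\ell^R)$ lie in $\cH^{\phi_n}$; the full collection of $\ell n$ bumps is supported near equally spaced points on the circle of radius $R$ in the $z$-plane, with pairwise separation at least $2R\sin(\pi/(\ell n))$. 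By exponential decay of $\omega$, every cross-term integral (within one component and across components) is $o(1)$ as $R\to\infty$; combined with $(V_2)$, this yields $\|u_i^R\|_V^2=n\|\omega\|_{V_\infty}^2+o(1)$ and $B(\bf w^R)=\ell n\irn\omega^{2p}+o(1)$. Using the Nehari identity $\|\omega\|_{V_\infty}^2=\irn\omega^{2p}$, the scalar $s_R$ from (d) satisfies $s_R\to 1$, whence
$$c^{\phi_n}\leq\cJ^{\phi_n}(s_R\bf w^R)=\frac{p-1}{2p}\sum_i\|s_Ru_i^R\|_V^2=\ell n\,\mathfrak{c}_\infty+o(1).$$
Letting $R\to\infty$ proves (e); the strict inequality $c^{\phi_n}<\ell n\,\mathfrak{c}_\infty$ needed downstream is where the refined decay hypothesis $(V_3^n)$ with $\lambda<2\sin(\pi/(\ell n))$ would enter, but the weak inequality asserted in (e) follows from $(V_2)$ alone.
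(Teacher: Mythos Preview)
Your proposal is correct and follows essentially the same route as the paper: the same ray-scaling computation for (d), the same Nehari comparison with $\cM^{G_n}$ via $\beta<0$ for (b), the same derivative $\Psi'(\bf u)\bf u=(2-2p)\sum_i\|u_i\|_V^2$ for (c), and the same multi-bump test function built from translates of the radial ground state $\omega$ at $\ell n$ equally spaced points on a large circle for (e). The only cosmetic differences are that the paper proves (a) with compactly supported radial bumps placed at the same $\ell n$ points (so the element lands directly in $\cN^{\phi_n}$ without invoking (d)), and writes $s_{\bf u}$ in terms of $u_1$ alone using the $\phi_n$-symmetry; you also omit the one-line remark that closedness of $\cN^{\phi_n}$ in $\cH^{\phi_n}$ follows from the uniform lower bound in (b).
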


\begin{proof}
The proof is easy. We give the details for the sake of completeness.

$(a):$ \ Let $\vp\in\cC_c^\infty(\rn)$ be a nontrivial radial function such that $\|\vp\|_V^2=\irn|\vp|^{2p}$. Set $\xi_{i,j}:=(\e^{2\pi\mathrm{i}(i+\ell j)/\ell n},0)\in\cc\times\r^{N-2}\equiv\rn$ and define
\begin{align*}
u_{R,i+1}(x):=\sum_{j=0}^{n-1}\vp(x-R\xi_{i,j}),\qquad i=0,\ldots,n-1,
\end{align*}
where $R>0$ is taken large enough so that $u_{R,i}$ and $u_{R,j}$ have disjoint supports for every $i\neq j$. Then, $\bf u_R:=(u_{R,1},\ldots,u_{R,1})\in \cN^{\phi_n}$.

$(b):$ \ Let $\bf u=(u_1,\ldots,u_\ell)\in\cN^{\phi_n}$. As $\beta<0$ we have
$$0<\|u_i\|_V^2=\|u_1\|_V^2 \leq \irn|u_1|^{2p}=\irn|u_i|^{2p}\qquad\forall i=2,\ldots,\ell.$$
Hence, there exists $s\in(0,1]$ such that $su_i\in\cM^{G_n}$ for every $i=1,\ldots,\ell$. Therefore,
$$\ell\mathfrak{c}^{G_n}\leq\sum_{i=1}^\ell J(s_iu_i)=\frac{p-1}{2p}\sum_{i=1}^\ell\|s_iu_i\|^2_V\leq\frac{p-1}{2p}\sum_{i=1}^\ell\|u_i\|^2_V=\cJ^\phi(\bf u).$$
It follows that \ $\ell\mathfrak{c}^{G_n}\leq c^{\phi_n}$.

$(c):$ \ The function $\Psi:\cH^{\phi_n}\smallsetminus\{0\}\to\r$ given by $\Psi(\bf u):=(\cJ^{\phi_n})'(\bf u)\bf u$ is of class $\cC^1$ and $\cN^{\phi_n}=\Psi^{-1}(0)$. It follows from $(b)$ that $\cN^{\phi_n}$ is a closed subset of $\cH^{\phi_n}$. As
$$\Psi'(\bf u)\bf u=(2-2p)\ell\,\|u_1\|_V^2\neq 0,$$
we have that $0$ is a regular value of $\Psi$. This shows that $\cN^{\phi_n}$ is a $\cC^1$-submanifold of codimension $1$ of $\cH^{\phi_n}$. It also shows that $\bf u\not\in\ker\Psi'(\bf u)=:T_{\bf u}\cN^{\phi_n}$, the tangent space of $\cN^{\phi_n}$ at $\bf u$. Hence,
$$\cH^{\phi_n}=T_{\bf u}\cN^{\phi_n}\oplus\r\bf u.$$
Since, by definition, $(\cJ^{\phi_n})'(\bf u)\bf u=0$ for every $\bf u\in\cN^{\phi_n}$, we infer that a critical point of the restriction of  $\cJ^{\phi_n}$ to $\cN^{\phi_n}$ is a critical point of $\cJ^{\phi_n}$. 

$(d):$ \ The proof is straightforward. The number $s_{\bf u}$ is 
$$s_{\bf u}=\left(\frac{\|u_1\|_V^2}{\irn|u_1|^{2p} + \sum\limits_{\substack{j=1 \\ j\neq 1}}^\ell\beta\irn|u_1|^p|u_j|^p}\right)^{1/(2p-2)}.$$

$(e):$ \ Let $\omega$ be the least energy positive radial solution to \eqref{eq:limit_problem} and set $\xi_{i,j}=(\e^{2\pi\mathrm{i}(i+\ell j)/\ell n},0)\in\cc\times\r^{N-2}\equiv\rn$. Define
\begin{align*}
w_{R,i+1}(x)&:=\sum_{j=0}^{n-1}\omega(x-R\xi_{i,j}),\qquad i=0,\ldots,n-1.
\end{align*}
Then $\bf w_R=(w_{R,1},\ldots,w_{R,\ell})\in\cH^{\phi_n}$. If $R$ is sufficiently large, statement $(d)$ yields $s_R\in(0,\infty)$ such that $s_R\bf w_R\in\cN^{\phi_n}$ and $s_{R}\to 1$ as $R\to\infty$. Using assumption $(V_2)$ we obtain
\begin{align*}
c^{\phi_n}&\leq\cJ^{\phi_n}(s_R\bf w_R)=\frac{p-1}{2p}\sum_{i=1}^{\ell}\|s_{R,i}w_{R,i}\|_V^2\to\ell n\mathfrak{c}_ \infty\quad\text{as \ }R\to\infty.
\end{align*}
This shows that $c^{\phi_n}\leq \ell n\mathfrak{c_\infty}$, as claimed.
\end{proof}

\begin{lemma}\label{lem:orbits}
Let $(x_k)$ be a sequence in $\rn$, $N\geq 4$. After passing to a subsequence, there exists a sequence $(\xi_k)$ in $\rn$ and a constant $C_0 >0$ such that
$$|x_k-\xi_k| \leq C_0 \qquad\text{for all \ }k\in\n,$$
and one of the following statements holds true:
\begin{itemize}
\item either $\xi_k =0$ for all $k$,
\item or $\xi_k = (\zeta_k,0)\in\cc\times\r^{N-2}$ and $|\zeta_k|\to\infty$,
\item or for each $m\in\n$ there exist $\gamma_1,\ldots,\gamma_m \in O(N-2)$ such that $|\gamma_i \xi_k - \gamma_j \xi_k| \to \infty$ if $i \neq j$.
\end{itemize}
\end{lemma}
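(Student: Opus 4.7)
The plan is to decompose $\rn=\cc\times\r^{N-2}$, write $x_k=(z_k,y_k)$, and carry out a trichotomy according to which coordinates escape to infinity. The three alternatives in the conclusion correspond respectively to: the whole sequence bounded; the planar coordinate $z_k$ unbounded while the remaining coordinate $y_k$ is bounded; and the remaining coordinate $y_k$ unbounded, in which case rotations in $O(N-2)$ can be used to spread the sequence out in $(N-3)$-sphere directions.

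First I would pass to a subsequence so that either $(x_k)$ is bounded or $|x_k|\to\infty$. In the former case, setting $\xi_k:=0$ and $C_0:=\sup_k|x_k|$ gives the first alternative. Otherwise I would pass to a further subsequence so that either $(y_k)$ is bounded or $|y_k|\to\infty$. If $(y_k)$ is bounded, then $|x_k|\to\infty$ forces $|z_k|\to\infty$, and choosing $\xi_k:=(z_k,0)$ with $C_0:=\sup_k|y_k|$ yields the second alternative.

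The substantive case is $|y_k|\to\infty$. Here I would take $\xi_k:=x_k$ (so any positive $C_0$ works) and aim at the third alternative. By compactness of $S^{N-3}\subset\r^{N-2}$, pass once more to a subsequence so that $y_k/|y_k|\to\eta$ for some $\eta\in S^{N-3}$. For any $\gamma\in O(N-2)$ one has $\gamma\xi_k=(z_k,\gamma y_k)$, hence
$$|\gamma_i\xi_k-\gamma_j\xi_k|=|\gamma_iy_k-\gamma_jy_k|=|y_k|\,\bigl|\gamma_i(y_k/|y_k|)-\gamma_j(y_k/|y_k|)\bigr|,$$
which behaves like $|y_k|\,|\gamma_i\eta-\gamma_j\eta|$ as $k\to\infty$. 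Given $m\in\n$, it therefore suffices to produce $\gamma_1,\ldots,\gamma_m\in O(N-2)$ such that the points $\gamma_i\eta$ on $S^{N-3}$ are pairwise distinct.

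The last step is the small but essential obstacle, and it is precisely where the hypothesis $N\geq 4$ enters: because $N-2\geq 2$ the sphere $S^{N-3}$ is infinite and $O(N-2)$ acts transitively on it, so the orbit of $\eta$ contains at least $m$ distinct points, which can be written as $\gamma_1\eta,\ldots,\gamma_m\eta$. For $N=3$ one would have $S^0=\{\pm 1\}$ and $O(1)=\{\pm 1\}$, so only $m\leq 2$ could be handled; this is exactly the extra source of noncompactness for $N=3$ flagged in the paper's remark on that dimension.
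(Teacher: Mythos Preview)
Your argument is correct and is the natural proof of this lemma. The paper does not give a self-contained argument here; it simply writes ``See \cite[Lemma 3.1]{cmp}'' and defers the proof to that reference. What you wrote is almost certainly the content of that cited lemma: the trichotomy on whether $(x_k)$ is bounded, $(y_k)$ is bounded with $|z_k|\to\infty$, or $|y_k|\to\infty$, followed in the last case by passing to a subsequence with $y_k/|y_k|\to\eta\in S^{N-3}$ and choosing $\gamma_1,\ldots,\gamma_m\in O(N-2)$ with $\gamma_1\eta,\ldots,\gamma_m\eta$ pairwise distinct (possible precisely because $N-2\geq 2$). Your identification of where $N\geq 4$ is used, and the connection to Remark~\ref{rem:N=3}, is exactly right.
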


\begin{proof}
See \cite[Lemma 3.1]{cmp}.
\end{proof}

\begin{remark} \label{rem:N=3}
\emph{Note that this lemma is not true in dimension $N=3$ because $O(1)=\{1,-1\}$.}
\end{remark}

\begin{theorem} \label{thm:splitting} 
Let $\bf u_k=(u_{k,1},\ldots,u_{k,\ell})\in\cN^{\phi_n}$ be such that $\cJ^{\phi_n}(\bf u_k)\to c^{\phi_n}$ and $u_{k,i}\geq 0$. Then, after passing to a subsequence, either $\bf u_k\to\bf u$ strongly in $\cH^{\phi_n}$ with $u_i\geq 0$, or there are points $(z_{k},0)\in\cc\times\r^{N-2}\equiv\rn$ such that $|z_{k}|\to\infty$,
$$\lim_{k\to\infty}\Big\|u_{k,1}-\sum_{j=0}^n\omega\big( \ \cdot \ -\,(\mathrm{e}^{2\pi\mathrm{i}j/n}z_{k},0)\big)\Big\|=0,$$
and \ $c^{\phi_n} = \ell n\mathfrak{c}_\infty$, where $\omega$ is the least energy positive radial solution to \eqref{eq:limit_problem}. 
\end{theorem}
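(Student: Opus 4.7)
The sequence $\{\bf u_k\}$ is bounded in $\cH^{\phi_n}$ because $\cJ^{\phi_n}(\bf u_k)=\ell\frac{p-1}{2p}\|u_{k,1}\|_V^2$ converges. Since $\cN^{\phi_n}$ is a natural constraint (Proposition \ref{prop:nehari}(c)), Ekeland's variational principle lets me replace $\bf u_k$ by a Palais--Smale sequence for $\cJ^{\phi_n}$ at level $c^{\phi_n}$. After extracting a subsequence, $\bf u_k\rightharpoonup\bf u$ weakly in $\cH^{\phi_n}$ with $u_{k,i}\to u_i$ a.e.\ and in $L^{2p}_{\mathrm{loc}}(\rn)$; hence $u_i\geq 0$ and the weak limit $\bf u$ is a $\phi_n$-equivariant critical point of $\cJ^{\phi_n}$. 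The relation $u_{j+1}(z,y)=u_1(\mathrm{e}^{2\pi\mathrm{i}j/\ell n}z,y)$ forces $\bf u$ to be either identically zero or fully nontrivial; in the latter case $\bf u\in\cN^{\phi_n}$ and $\cJ^{\phi_n}(\bf u)\geq c^{\phi_n}>0$.

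\textbf{Splitting and the orbits lemma.} Set $\bf v_k:=\bf u_k-\bf u$. A Brezis--Lieb expansion for the $L^{2p}$ term together with its product version
$$\irn|u_{k,i}|^p|u_{k,j}|^p=\irn|u_i|^p|u_j|^p+\irn|v_{k,i}|^p|v_{k,j}|^p+o(1)$$
and the decay of $V-V_\infty$ at infinity yield the splitting $\cJ^{\phi_n}(\bf u_k)=\cJ^{\phi_n}(\bf u)+\cJ_\infty^{\phi_n}(\bf v_k)+o(1)$, where $\cJ_\infty^{\phi_n}$ denotes the analogous functional with $V$ replaced by $V_\infty$, and they make $\bf v_k$ an approximate Palais--Smale sequence for $\cJ_\infty^{\phi_n}$. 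Suppose $\bf v_k\not\to 0$ in $\cH^{\phi_n}$. Then Lions' non-vanishing criterion, combined with the $G_n$-invariance of $v_{k,1}$, produces points $x_k\in\rn$ with $|x_k|\to\infty$ and $\int_{B(x_k,1)}v_{k,1}^2\geq\delta>0$. I then apply Lemma \ref{lem:orbits}: the alternative $\xi_k=0$ is incompatible with $|x_k|\to\infty$, and the third alternative is ruled out because arbitrarily many well-separated $O(N-2)$-translates of a fixed nontrivial bubble would make $\|v_{k,1}\|_V^2$ unbounded. Only $\xi_k=(\zeta_k,0)$ with $|\zeta_k|\to\infty$ survives.

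\textbf{Pinwheel cluster and energy accounting.} The $\z_n$-orbit $\{(\mathrm{e}^{2\pi\mathrm{i}j/n}\zeta_k,0):j=0,\ldots,n-1\}$ gives $n$ mutually diverging centers, and the $G_n$-invariance of $v_{k,1}$ forces the translates $v_{k,1}(\cdot+(\mathrm{e}^{2\pi\mathrm{i}j/n}\zeta_k,0))$ to share the same nontrivial nonnegative weak limit $\omega_0$, a solution of \eqref{eq:limit_problem}; by $\phi_n$-equivariance the other components carry matching clusters. Subtracting the bubble profile and iterating, each cluster contributes at least $n\mathfrak{c}_\infty$ to every one of the $\ell$ components, hence at least $\ell n\mathfrak{c}_\infty$ to $\cJ_\infty^{\phi_n}(\bf v_k)$. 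Combining with Proposition \ref{prop:nehari}(e) gives $\cJ^{\phi_n}(\bf u)\leq c^{\phi_n}-\ell n\mathfrak{c}_\infty\leq 0$, forcing $\bf u\equiv 0$, exactly one cluster, $\omega_0=\omega$, and the equality $c^{\phi_n}=\ell n\mathfrak{c}_\infty$. Setting $z_k:=\zeta_k$ delivers the claimed asymptotic expansion (with the sum running over $j=0,\ldots,n-1$). Conversely, if $\bf u\not\equiv 0$, the same inequality $\cJ^{\phi_n}(\bf u)+\ell n\mathfrak{c}_\infty\leq c^{\phi_n}\leq\ell n\mathfrak{c}_\infty$ rules out any cluster, so $\bf v_k\to 0$ strongly and $\bf u_k\to\bf u$ in $\cH^{\phi_n}$, which is the first alternative of the theorem.

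\textbf{Main obstacle.} The delicate point is the iterative bubble extraction: I must rule out any residual mass beyond the single pinwheel cluster and identify the profile at each of the $n$ centers as exactly the ground state $\omega$ rather than a higher-energy solution. The sharp budget $c^{\phi_n}\leq\ell n\mathfrak{c}_\infty$ is what closes the accounting, leaving room for exactly one ground state at each of the $\ell n$ pinwheel positions and nothing else. As Remark \ref{rem:N=3} indicates, the whole scheme collapses in $N=3$ because $O(1)=\{\pm 1\}$ cannot disperse bubbles in the second factor, so the third alternative in Lemma \ref{lem:orbits} is no longer automatically excluded.
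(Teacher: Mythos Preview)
Your overall architecture (Ekeland, Lions, Lemma~\ref{lem:orbits}, then energy accounting against the ceiling $c^{\phi_n}\le \ell n\mathfrak{c}_\infty$) matches the paper's, but you organize it differently: you first subtract the weak limit $\bf u$ and run a profile decomposition on $\bf v_k=\bf u_k-\bf u$, whereas the paper applies Lions directly to $u_{k,1}$ and lets Lemma~\ref{lem:orbits} produce a clean dichotomy (concentration at the origin $\Rightarrow$ strong convergence; concentration at $(\zeta_k,0)$ with $|\zeta_k|\to\infty$ $\Rightarrow$ bubble picture). The paper's route avoids the iterative extraction entirely, because the single energy identity \eqref{eq:norms} plus the budget forces the remainder to vanish in one step.

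There is, however, a genuine gap in your sketch. You assert that the translated weak limit $\omega_0$ of $v_{k,1}(\cdot+\xi_k)$ is ``a solution of \eqref{eq:limit_problem}''. That is not what the Palais--Smale information gives you: passing to the limit in $\partial_i\cJ(\bf u_k)=o(1)$ after translation yields the \emph{coupled} limit system (the paper's \eqref{eq:solution}), in which the equation for $w_1$ still carries the terms $\beta\sum_{j\neq 1}|w_j|^p|w_1|^{p-2}w_1$, where $w_j$ is the weak limit of $u_{k,j}(\cdot+\xi_k)$. You have no a priori reason to know these $w_j$ vanish. The step the paper uses---and which your argument is missing---is to exploit $\beta<0$: testing the coupled limit equation with $w_1$ gives
\[
\|w_1\|_{V_\infty}^2=\int_{\r^N}|w_1|^{2p}+\beta\sum_{j\neq 1}\int_{\r^N}|w_j|^p|w_1|^p\le \int_{\r^N}|w_1|^{2p},
\]
so some $t\in(0,1]$ puts $t w_1$ on $\cM_\infty$, whence $\mathfrak{c}_\infty\le \tfrac{p-1}{2p}\|w_1\|_{V_\infty}^2$. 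This is precisely what makes your sentence ``each cluster contributes at least $n\mathfrak{c}_\infty$'' true, and what, after the budget closes, forces $t=1$, hence $w_1\in\cM_\infty$ with $J_\infty(w_1)=\mathfrak{c}_\infty$, so that $w_1$ is the ground state $\omega$ up to translation. Without invoking $\beta<0$ at this point, neither the lower energy bound nor the identification $\omega_0=\omega$ is justified.
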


\begin{proof}
Invoking Ekeland's variational principle \cite[Theorem 8.5]{w} we may assume that $(\cJ^{\phi_n})'(\bf u_k)\to 0$ in $(\cH^{\phi_n})'$. 

Since $\beta<0$, Proposition \ref{prop:nehari}$(b)$ yields $c_0>0$ such that
$$\irn|u_{k,1}|^{2p}>c_0\qquad\forall k\in\n.$$
By Lions' lemma \cite[Lemma 1.21]{w} there exist $\delta>0$ and $x_k\in\rn$ such that, after passing to a subsequence,
\begin{equation*}
\int_{B_1(x_k)}|u_{k,1}|^{2p}>\delta\qquad\forall k\in\n.
\end{equation*}
For $(x_k)$ we fix a sequence $(\xi_k)$ and a constant $C_0>0$ such that $|x_k-\xi_k|\leq C_0$ for all $k\in\n$, satisfying one of the alternatives stated in Lemma \ref{lem:orbits}. Then,
\begin{equation} \label{eq:nonnull}
\int_{B_{C_0+1}(\xi_k)}|u_{k,1}|^{2p}\geq\int_{B_1(x_k)}|u_{k,1}|^{2p}>\delta\qquad\forall k\in\n.
\end{equation}
It follows that, either $\xi_k=0$, or $\xi_k = (\zeta_k,0) \in \mathbb{C} \times \mathbb{R}^{N-2}$ and $\zeta_k\to\infty$. Otherwise, by Lemma \ref{lem:orbits}, for each $m\in\n$ there would exist $\gamma_1,\ldots,\gamma_m\in O(N-2)$ such that $|\gamma_i\xi_k-\gamma_j\xi_k|\geq 2(C_0+1)$ if $i\neq j$ for large enough $k\in \n$ and, as $u_{k,1}$ is $G_n$-invariant, we would have that
\begin{align*}
\irn|u_{k,1}|^{2p}\geq\sum_{i=1}^m\int_{B_{C_0+1}(\gamma_i\xi_k)}|u_{k,1}|^{2p}=m\int_{B_{C_0+1}(\xi_k)}|u_{k,1}|^{2p}>m\delta,
\end{align*}
for all $m\in\n$. This is impossible because $(u_{k,1})$ is bounded in $L^{2p}(\rn)$.

Next, we distinguish two cases.
\smallskip  

\underline{Case 1.} \ $\xi_k=0$ for all $k\in\n$.

Since the sequence $(u_{k,1})$ is bounded in $H^1(\rn)$, passing to a subsequence, we have that $u_{k,1}\rh u_1$ weakly in $H^1(\rn)$, $u_{k,1}\to u_1$ in $L^{2p}_\mathrm{loc}(\rn)$ and $u_{k,1}\to u_1$ a.e. in $\rn$. Hence, $u_1\geq 0$ and it follows from \eqref{eq:nonnull} that $u_1\neq 0$. Note that, as $u_{k,1}\in H^1(\rn)^{G_n}$, $u_1\in H^1(\rn)^{G_n}$. Set $u_{j+1}(z,y):=u_1(\e^{2\pi\i j/\ell n}z,y)$ for $(z,y)\in\cc\times\r^{N-2}$, $j=1,\ldots,\ell-1$, and $\bf u=(u_1,\ldots,u_\ell)$. Then, $u_{k,j+1}\rh u_{j+1}$ weakly in $H^1(\rn)$ and, as $(\cJ^{\phi_n})'(\bf u_k)\to 0$ in $(\cH^{\phi_n})'$, we derive from \eqref{eq:partial} that
\begin{align*}
0=\lim_{k\to\infty}\partial_1\cJ(\bf u_k)\vp=\partial_1\cJ(\bf u)\vp\qquad\text{for every \ }\vp\in\cC^\infty_c(\rn)^{G_n}.
\end{align*}
Hence, $\bf u\in\cN^{\phi_n}$ and
\begin{align*}
c^{\phi_n}&\leq\cJ^{\phi_n}(\bf u)=\frac{p-1}{2p}\sum_{i=1}^\ell\|u_i\|_V^2\leq\liminf_{k\to\infty}\frac{p-1}{2p}\sum_{i=1}^\ell\|u_{k,i}\|_V^2 \\
&=\lim_{k\to\infty}\cJ^{\phi_n}(\bf u_k)= c^{\phi_n}.
\end{align*}
Therefore, $\bf u_k\to\bf u$ strongly in $\cH^{\phi_n}$. This shows that, in Case 1, the first  alternative stated in Theorem \ref{thm:splitting} holds true.
\smallskip

\underline{Case 2.} \ $\xi_k = (\zeta_k,0) \in \mathbb{C} \times \mathbb{R}^{N-2}$ and $\zeta_k\to\infty$.

Set
$$w_{k,i}(x):=u_{k,i}(x+\xi_{k}),\qquad i=1,\ldots,\ell.$$
Note that $w_{k,i}$ is $O(N-2)$-invariant. Since the sequence $(w_{k,i})$ is bounded in $H^1(\rn)$, a subsequence satisfies $w_{k,i}\rh w_i$ weakly in $H^1(\rn)^{O(N-2)}$, $w_{k,i}\to w_i$ in $L^{2p}_\mathrm{loc}(\rn)$ and $w_{k,i}\to w_i$ a.e. in $\rn$. Hence, $w_i\geq 0$. To simplify notation, set $\alpha:=\e^{2\pi\i /n}$. Note that, as $|\alpha^j\xi_k-\alpha^m\xi_k|\to\infty$ if $j\neq m$, we have that
$$w_{k,i}\circ\alpha^{-m}-\sum_{j=m+1}^{n-1}(w_{i}\circ\alpha^{-m})(\,\cdot\,-\alpha^j\xi_k+\alpha^m\xi_k)\rh w_i\circ\alpha^{-m}$$
weakly in $H^1(\rn)$. Hence, setting $V_k(x):=V(x+\xi_k)$, Lemma \ref{lem:A1} gives
\begin{align*}
\|w_{i}\circ\alpha^{-m}\|_{V_\infty}^2 &
=\Big\|w_{k,i}\circ\alpha^{-m}-\sum_{j=m+1}^{n-1}(w_{i}\circ\alpha^{-j})(\,\cdot\,-\alpha^j\xi_k+\alpha^m\xi_k)\Big\|_{V_k}^2\\
&\quad-\Big\|w_{k,i}\circ\alpha^{-m}-\sum_{j=m}^{n-1}(w_{i}\circ\alpha^{-j})(\,\cdot\,-\alpha^j\xi_k+\alpha^m\xi_k)\Big\|_{V_k}^2+o(1).
\end{align*}
Since $u_{k,i}$ is $G_n$-invariant, the change of variable $y=z-\alpha^m\xi_k$ yields
\begin{align*}
&\Big\|u_{k,i}-\sum_{j=m+1}^{n-1}(w_{i}\circ\alpha^{-j})(\,\cdot\,-\alpha^j\xi_k)\Big\|_V^2\\
&\qquad=\Big\|u_{k,i}-\sum_{j=m}^{n-1}(w_{i}\circ\alpha^{-j})(\,\cdot\,-\alpha^j\xi_k)\Big\|_V^2+\|w_{i}\|_{V_\infty}^2+o(1),
\end{align*}
and iterating this identity we obtain
\begin{equation} \label{eq:norms}
\|u_{k,i}\|_V^2=\Big\|u_{k,i}-\sum_{j=0}^{n-1}(w_{i}\circ\alpha^{-j })(\,\cdot\,-\alpha^j\xi_k)\Big\|_V^2+n\|w_{i}\|_{V_\infty}^2+o(1).
\end{equation}
On the other hand, for any given $v\in H^1(\rn)^{O(N-2)}$ set $v_{k}(y):=v(y-\xi_{k})$ and
$$\what v_k(y):=\sum_{j=0}^{n-1}v_k(\alpha^jy).$$
Recalling that $u_{k,i}$ is $G_n$-invariant and performing the translation $y=x+\xi_k$, we obtain
\begin{align*}
\partial_i\cJ(\bf u_k)\what v_k &=\sum_{j=0}^{n-1}\partial_i\cJ(\bf u_k)(v_k\circ \alpha^j)=n\,\partial_i\cJ(\bf u_k)v_k\\
&=n\Big(\irn(\nabla w_{k,i}\cdot\nabla v+V_k(x)w_{k,i}v)-\irn|w_{k,i}|^{2p-2}w_{k,i}v \nonumber \\
&\qquad-\beta\sum\limits_{\substack{j=1\\j\neq i}}^\ell\irn|w_{k,j}|^p|w_{k,i}|^{p-2}w_{k,i}v\Big). 
\end{align*}
Note that $\what v_k$ is $G_n$-invariant. As $(\cJ^{\phi_n})'(\bf u_k)\to 0$, invoking \eqref{eq:partial} and assumption $(V_2)$, and passing to the limit as $k\to\infty$ we get
\begin{equation} \label{eq:solution}
0=\irn(\nabla w_i\cdot\nabla v+V_\infty w_iv)-\irn|w_i|^{2p-2}w_iv-\beta\sum\limits_{\substack{j=1\\j\neq i}}^\ell\irn|w_j|^p|w_i|^{p-2}w_iv
\end{equation}
for every $v\in H^1(\rn)^{O(N-2)}$ and $i=1,\ldots,\ell$. Since, by \eqref{eq:nonnull},
\begin{equation*}
\int_{B_{C_0+1}(0)}|w_{k,1}|^{2p}\geq\int_{B_{C_0+1}(\xi_k)}|u_{k,1}|^{2p}\geq\delta>0,
\end{equation*}
we see that $w_1\neq 0$. Furthermore, equation \eqref{eq:solution} implies that
\begin{equation} \label{eq:t_i}
\|w_1\|_{V_\infty}^2=\irn|w_1|^{2p}+\beta\sum\limits_{\substack{j=1\\j\neq 1}}^\ell\irn|w_j|^p|w_1|^{p}\leq\irn|w_1|^{2p},
\end{equation}
so there exists $t\in(0,1]$ such that $\|tw_1\|_{V_\infty}^2=\irn|tw_1|^{2p}$. It follows that $tw_1\in\cM_\infty$, and from equation \eqref{eq:norms} and Proposition \ref{prop:nehari}$(e)$ we derive
\begin{align*}
n\mathfrak{c}_\infty\leq\frac{p-1}{2p}n\|tw_1\|_{V_\infty}^2\leq\frac{p-1}{2p}n\|w_1\|_{V_\infty}^2\leq\lim_{k\to\infty}\frac{p-1}{2p}\|u_{k,1}\|_V^2=\frac{1}{\ell}c^{\phi_n}\leq n\mathfrak{c}_\infty.
\end{align*}
Therefore, $t=1$, $w_1\in\cM_\infty$ and $J_\infty(w_1)=\frac{p-1}{2p}\|w_1\|_{V_\infty}^2=\mathfrak{c}_\infty$, i.e., $w_1$ is a least energy solution of \eqref{eq:limit_problem}. Moreover, from \eqref{eq:norms} we get that
$$\lim_{k\to\infty}\Big\|u_{k,1}-\sum_{j=0}^{n-1}(w_1\circ\alpha^{-j })(\,\cdot\,-\alpha^j\xi_k)\Big\|_V^2=0.$$
Since the positive least energy solution to \eqref{eq:limit_problem} is unique up to translation and $w_1$ is $O(N-2)$-invariant, there exists $\xi=(\zeta,0)\in\cc\times\r^{N-2}$ such that $w_1(x)=\omega(x+\xi)$. Hence, $(w_1\circ\alpha^{-j })(x-\alpha^j\xi_k)=\omega(\alpha^{-j}x-\xi_k-\xi)=\omega(x-\alpha^j(\xi_k+\xi))$. So, setting $z_k:=\zeta_k+\zeta$, we obtain
$$\lim_{k\to\infty}\Big\|u_{k,1}-\sum_{j=0}^n\omega\big( \ \cdot \ -\,(\mathrm{e}^{2\pi\mathrm{i}j/n}z_{k},0)\big)\Big\|=0.$$
This shows that, in Case 2, the second alternative stated in Theorem \ref{thm:splitting} holds true.
\end{proof}

\begin{corollary}\label{cor:compactness}
If $c^{\phi_n} < \ell n\mathfrak{c}_\infty$, the system \eqref{eq:system} has a least energy fully nontrivial solution satisfying \eqref{eq:symmetries}.
\end{corollary}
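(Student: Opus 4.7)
The plan is to combine Theorem \ref{thm:splitting} with the hypothesis $c^{\phi_n}<\ell n\mathfrak{c}_\infty$ to exclude the noncompact alternative of the splitting theorem, and then extract a strongly convergent Palais-Smale minimizing sequence whose limit solves the system.

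First, I would construct a nonnegative Palais-Smale minimizing sequence on $\cN^{\phi_n}$. Starting from any $\bf u_k\in\cN^{\phi_n}$ with $\cJ^{\phi_n}(\bf u_k)\to c^{\phi_n}$, replace each $\bf u_k$ by $(|u_{k,1}|,\ldots,|u_{k,\ell}|)$: since $\cJ$ depends on the components only through $|u_i|^{2p}$ and $|u_i|^p|u_j|^p$, and since taking absolute values commutes with both the cyclic permutation $\sigma_1$ and with the $O(N-2)$-action, this substitution preserves $\phi_n$-equivariance and the Nehari identity and leaves the energy unchanged. Thus one may assume $u_{k,i}\ge 0$. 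By Proposition \ref{prop:nehari}$(c)$, $\cN^{\phi_n}$ is a $\cC^1$-submanifold and a natural constraint, so Ekeland's variational principle on $\cN^{\phi_n}$, combined with the retraction $\bf u\mapsto s_{\bf u}\bf u$ onto $\cN^{\phi_n}$ given by Proposition \ref{prop:nehari}$(d)$ (which preserves nonnegativity because $s_{\bf u}>0$), yields a sequence, still denoted $\bf u_k$, with $u_{k,i}\ge 0$, $\cJ^{\phi_n}(\bf u_k)\to c^{\phi_n}$, and $(\cJ^{\phi_n})'(\bf u_k)\to 0$ in $(\cH^{\phi_n})'$.

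Next I would apply Theorem \ref{thm:splitting}. Its second alternative forces $c^{\phi_n}=\ell n\mathfrak{c}_\infty$, contradicting the hypothesis, so the first alternative must hold: along a subsequence $\bf u_k\to\bf u$ strongly in $\cH^{\phi_n}$ with $u_i\ge 0$. Strong convergence gives $\bf u\in\cN^{\phi_n}$ and $\cJ^{\phi_n}(\bf u)=c^{\phi_n}$, so $\bf u$ minimizes $\cJ^{\phi_n}$ on a natural constraint and is therefore a critical point of $\cJ^{\phi_n}$. Because $\cJ$ is $G_{\ell n}$-invariant and $\cH^{\phi_n}$ is the $G_{\ell n}$-fixed point space of $\cH$, the principle of symmetric criticality (as used in Section \ref{sec:preliminaries}) promotes $\bf u$ to a critical point of $\cJ$ on $\cH$, that is, to a fully nontrivial solution of the system with the symmetries \eqref{eq:symmetries}, of least energy among all such solutions.

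It remains to upgrade nonnegativity to strict positivity of each component: standard elliptic bootstrap puts $u_i\in\cC^2(\rn)$, and rewriting the $i$-th equation on $\{u_i>0\}$ as $-\Delta u_i + c_i(x)u_i = u_i^{2p-1}\ge 0$ with $c_i:=V+|\beta|\sum_{j\ne i}u_j^p u_i^{p-2}\ge V_{\min}>0$ locally bounded, the strong maximum principle forces $u_i>0$ on all of $\rn$. The main obstacle is the first step: producing a Palais-Smale minimizing sequence whose components remain nonnegative, since nonnegativity is required to enter the hypotheses of Theorem \ref{thm:splitting}. Once the retraction of Proposition \ref{prop:nehari}$(d)$ is used to carry out Ekeland's argument without destroying the positive cone, the rest of the proof is a routine combination of the splitting theorem, symmetric criticality, and the strong maximum principle.
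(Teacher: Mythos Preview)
Your main line---feed a nonnegative minimizing sequence into Theorem~\ref{thm:splitting} and use the strict inequality $c^{\phi_n}<\ell n\mathfrak{c}_\infty$ to rule out the second alternative---is exactly the argument the paper has in mind (the corollary is stated without proof, as an immediate consequence of Theorem~\ref{thm:splitting}). Two points deserve comment.

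\medskip
\emph{The Ekeland step is superfluous.} Theorem~\ref{thm:splitting} requires only a nonnegative minimizing sequence on $\cN^{\phi_n}$; the Palais--Smale property is established \emph{inside} its proof. Thus once you replace $\bf u_k$ by $(|u_{k,1}|,\ldots,|u_{k,\ell}|)$ you are already in its hypotheses, and the concern you flag as the ``main obstacle''---preserving nonnegativity through an Ekeland perturbation---never arises.

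\medskip
\emph{The strict positivity argument has a genuine gap in the paper's range of exponents.} Since $N\geq 4$ one has $1<p<\frac{N}{N-2}\leq 2$, so $p-2<0$ always. Hence your coefficient
\[
c_i=V+|\beta|\sum_{j\neq i}u_j^{\,p}\,u_i^{\,p-2}
\]
is \emph{not} locally bounded on $\rn$: it blows up at any zero of $u_i$ where some $u_j>0$. The strong maximum principle genuinely fails for merely nonnegative unbounded zero-order coefficients (in one dimension, $u=|x|^\alpha$ with $\alpha>1$ solves $-u''+cu=0$ with $c=\alpha(\alpha-1)|x|^{-2}\geq 0$, yet $u(0)=0$), so ``$c_i$ locally bounded $\Rightarrow u_i>0$'' does not go through. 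Moreover, the Vazquez--Pucci--Serrin criterion does not rescue the argument either: writing $\Delta u_i\leq C_1u_i+C_2u_i^{\,p-1}$ gives $F(s)\sim s^{p}$ near $0$ and $\int_0^\delta F(s)^{-1/2}\,\mathrm{d}s<\infty$ precisely when $p<2$. The paper itself does not address this point; establishing $u_i>0$ for $1<p<2$ in competitive systems requires a more delicate argument than the standard strong maximum principle you invoke.
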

 
\section{Existence of a solution} \label{sec4}

We define the set of \emph{weak $(n,\ell)$-pinwheel partitions} as
\begin{align*}
\cW^n_\ell:=\{(u_1,\ldots,u_\ell)\in\cH^{\phi_n}:u_i\neq 0,\,\|u_i\|_V^2=|u_i|_{2p}^{2p},\,u_iu_j=0\text{ in }\rn\text{ if }i\neq j\},
\end{align*}
and set
$$\widehat{c}^{\phi_n}:=\inf_{(u_1,\ldots,u_\ell)\in\cW^n_\ell}\,\frac{p-1}{2p}\sum_{i=1}^\ell\|u_i\|_V^2.$$
Our next goal is to give an upper estimate for $\widehat{c}^{\phi_n}$. To this end, we choose $\eps\in\big(0,\frac{d_{\ell n}-\lambda}{d_{\ell n}+\lambda}\big)$ and a radial function $\chi\in\cC^\infty(\rn)$ satisfying $0\leq\chi\leq 1$, $\chi(x)=1$ if $|x|\leq 1-\eps$ and $\chi(x)=0$ if $|x|\geq 1$. Let $\omega$ be the positive least energy radial solution to \eqref{eq:limit_problem}. For each $r>0$ define
$$\omega_r(x):=\chi\left(\frac{x}{r}\right)\omega(x).$$

\begin{lemma} \label{lem:estimates_omega}
As $r\to\infty$,
$$\left| \|\omega\|^2-\|\omega_r\|^2 \right|=O(\e^{-2(1-\eps)\sqrt{V_\infty}r}),\qquad\left| |\omega|^{2p}_{2p}-|\omega_r|^{2p}_{2p} \right|=O(\e^{-2p(1-\eps)\sqrt{V_\infty}r}),$$
where $|\,\cdot\,|_{2p}$ denotes the norm in $L^{2p}(\rn)$.
\end{lemma}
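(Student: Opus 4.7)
The plan is to reduce both estimates to the well-known exponential decay of the positive radial ground state $\omega$. Since $\omega\in H^1(\rn)$ solves $-\Delta u+V_\infty u=u^{2p-1}$ with $V_\infty>0$, standard arguments (comparison with the Bessel kernel of $-\Delta+V_\infty$, Gidas--Ni--Nirenberg monotonicity, together with elliptic regularity for the gradient) yield a constant $C>0$ such that
$$\omega(x)+|\nabla\omega(x)|\leq C(1+|x|)^{-(N-1)/2}\e^{-\sqrt{V_\infty}|x|}\qquad\text{for every }x\in\rn.$$
I would record this pointwise bound at the start of the proof and cite it as classical.

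For the $L^{2p}$ estimate, since $1-\chi(x/r)^{2p}\in[0,1]$ vanishes on $\{|x|\leq(1-\eps)r\}$, I would write
$$0\leq|\omega|_{2p}^{2p}-|\omega_r|_{2p}^{2p}=\irn\bigl(1-\chi(x/r)^{2p}\bigr)\omega(x)^{2p}\d x\leq\int_{|x|\geq(1-\eps)r}\omega(x)^{2p}\d x.$$
Inserting the decay bound and passing to polar coordinates gives
$$\int_{|x|\geq(1-\eps)r}\omega^{2p}\d x\leq C\int_{(1-\eps)r}^\infty\rho^{N-1-p(N-1)}\e^{-2p\sqrt{V_\infty}\rho}\d\rho=O\bigl(\e^{-2p(1-\eps)\sqrt{V_\infty}r}\bigr),$$
which is the second estimate.

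For $\|\omega\|^2-\|\omega_r\|^2$ (meaning the $\|\cdot\|_{V_\infty}$-norm associated with the limit problem) I would split
$$\|\omega\|^2-\|\omega_r\|^2=\irn\bigl(|\nabla\omega|^2-|\nabla\omega_r|^2\bigr)\d x+V_\infty\irn\bigl(\omega^2-\omega_r^2\bigr)\d x.$$
The potential piece is handled exactly as above and is $O(\e^{-2(1-\eps)\sqrt{V_\infty}r})$. For the gradient piece, a direct calculation gives
$$\nabla\omega_r(x)=r^{-1}\chi'(x/r)\omega(x)+\chi(x/r)\nabla\omega(x),$$
so $\nabla\omega_r=\nabla\omega$ on $\{|x|\leq(1-\eps)r\}$ and $\nabla\omega_r=0$ on $\{|x|\geq r\}$. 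Therefore the integrand is supported in $\{|x|\geq(1-\eps)r\}$, and there we have the pointwise bound
$$\bigl||\nabla\omega|^2-|\nabla\omega_r|^2\bigr|\leq C\bigl(|\nabla\omega(x)|^2+r^{-2}\omega(x)^2\bigr).$$
Applying once more the exponential decay of $\omega$ and $|\nabla\omega|$ and integrating as before yields $O(\e^{-2(1-\eps)\sqrt{V_\infty}r})$, completing the proof.

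No genuine obstacle arises. The only point requiring a little care is the cutoff-derivative term on the annulus $\{(1-\eps)r\leq|x|\leq r\}$: the extra factor $r^{-1}\|\chi'\|_\infty$ it introduces is only polynomial in $r$ and is therefore absorbed without affecting the exponential rate $\e^{-2(1-\eps)\sqrt{V_\infty}r}$.
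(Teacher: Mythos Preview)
Your argument is correct and follows exactly the approach the paper takes: the paper's proof is a one-line sketch stating that both estimates ``follow easily from the well known estimates $|\omega(x)|=O(|x|^{-\frac{N-1}{2}}\e^{-\sqrt{V_\infty}|x|})$ and $|\nabla\omega(x)|=O(|x|^{-\frac{N-1}{2}}\e^{-\sqrt{V_\infty}|x|})$'' and refers to \cite{cw}. You have simply written out the details of this same computation, including the cutoff-derivative term on the annulus, and everything is in order.
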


\begin{proof}
This statements follow easily from the well known estimates $|\omega(x)|=O(|x|^{-\frac{N-1}{2}}\e^{-\sqrt{V_\infty}|x|})$ and $|\nabla\omega(x)|=O(|x|^{-\frac{N-1}{2}}\e^{-\sqrt{V_\infty}|x|})$, as in \cite[Lemma 2]{cw}.
\end{proof}

Set $\varrho:=\frac{d_{\ell n}+\lambda}{4}$, and for $R>1$ define
$$\widehat w_{1,R}(x):=\sum_{j=0}^{n-1}\omega_{\varrho R}(x-R(\e^{2\pi\i j/n},0))\qquad\text{and}\qquad w_{1,R}:=t_R\widehat w_{1,R},$$
where $t_R\in(0,\infty)$ is such that $\|w_{1,R}\|_V^2=|w_{1,R}|_{2p}^{2p}$. Note that $t_R\to 1$ as $R\to\infty$, \ $w_{1,R}$ is $G_n$-invariant and 
$$\supp\big(\omega_{\varrho R}(\,\cdot\,-R(\e^{2\pi\i j/\ell n},0))\big)\subset\overline{ B_{\varrho R}(R(\e^{2\pi\i j/\ell n},0))}.$$
Set $w_{j+1,R}(\e^{2\pi\i j/\ell n}z,y):=w_{1,R}(z,y)$ for $(z,y)\in\cc\times\r^{N-2}$ and $j=1,\ldots,\ell-1$. Since $\varrho<\frac{d_{\ell n}}{2}$ we have that $\supp(w_{i,R})\cap\supp(w_{j,R})=\emptyset$ if $i\neq j$. Hence, $\bf w_R=(w_{1,R},\ldots,w_{\ell,R})\in\cW^n_\ell$.

\begin{lemma} \label{lem:energy_estimate}
There exist $C_1,R_1>0$ such that
$$\frac{p-1}{2p}\sum_{i=1}^\ell\|w_{i,R}\|_V^2=\cJ^{\phi_n}(\bf w_R)\leq \ell n\mathfrak{c}_\infty-C_1\e^{-\lambda\sqrt{V_\infty} R}\quad\text{for all \ }R\geq R_1.$$
\end{lemma}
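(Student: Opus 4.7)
The plan is to pass to a one-component computation and then quantify the effect of the potential perturbation. First observe that the $\ell n$ centers $R(\mathrm{e}^{2\pi\mathrm{i}(i+\ell j)/\ell n},0)$ with $0\leq i\leq\ell-1$ and $0\leq j\leq n-1$ are the vertices of a regular $\ell n$-gon on the sphere of radius $R$ in $\cc\times\{0\}$, so adjacent vertices are at distance $Rd_{\ell n}$. The inequality $\varrho<d_{\ell n}/2$, which is equivalent to $\lambda<d_{\ell n}$ and hence ensured by $(V_3^n)$, makes the $\ell n$ closed balls $\overline{B_{\varrho R}(\,\cdot\,)}$ pairwise disjoint, so $w_{i,R}w_{j,R}\equiv 0$ for $i\neq j$. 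Combined with the normalization $\|w_{i,R}\|_V^2=|w_{i,R}|_{2p}^{2p}$ built into $t_R$, this yields $\bf w_R\in\cN^{\phi_n}$ and the left-hand equality of the displayed formula. Because each $w_{i,R}$ is the composition of $w_{1,R}$ with a rotation in $O(N)$ and $V$ is radial, all the $\|w_{i,R}\|_V^2$ coincide, so $\cJ^{\phi_n}(\bf w_R)=\frac{p-1}{2p}\ell\|w_{1,R}\|_V^2$.

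Next I express $\|w_{1,R}\|_V^2$ in closed form. Set $x_0:=(R,0)\in\cc\times\r^{N-2}\equiv\rn$, $A:=\|\omega_{\varrho R}(\,\cdot\,-x_0)\|_V^2$ and $B:=|\omega_{\varrho R}|_{2p}^{2p}$. Since $V$ and $\omega_{\varrho R}$ are radial, every translate $\omega_{\varrho R}(\,\cdot\,-R(\mathrm{e}^{2\pi\mathrm{i}j/n},0))$ has $V$-norm squared equal to $A$, and the disjointness of the $n$ supports making up $\widehat w_{1,R}$ gives $\|\widehat w_{1,R}\|_V^2=nA$ and $|\widehat w_{1,R}|_{2p}^{2p}=nB$. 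The defining condition for $t_R$ then forces $t_R^{2p-2}=A/B$, so
\[
\|w_{1,R}\|_V^2=t_R^2\|\widehat w_{1,R}\|_V^2=n\,\frac{A^{p/(p-1)}}{B^{1/(p-1)}},
\]
and the lemma reduces to showing $A^{p/(p-1)}/B^{1/(p-1)}\leq\|\omega\|_{V_\infty}^2-c\,\e^{-\lambda\sqrt{V_\infty}R}$ for some $c>0$ and all sufficiently large $R$.

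To estimate $A$, split it as $A=\|\omega_{\varrho R}\|_{V_\infty}^2+I_R$ with $I_R:=\irn(V(x+x_0)-V_\infty)\omega_{\varrho R}(x)^2\,\d x$. Lemma \ref{lem:estimates_omega} provides $\|\omega_{\varrho R}\|_{V_\infty}^2=\|\omega\|_{V_\infty}^2+O(\e^{-2(1-\eps)\sqrt{V_\infty}\varrho R})$ and $B=|\omega|_{2p}^{2p}+O(\e^{-2p(1-\eps)\sqrt{V_\infty}\varrho R})$; the prescribed values $\varrho=\frac{d_{\ell n}+\lambda}{4}$ and $\eps<\frac{d_{\ell n}-\lambda}{d_{\ell n}+\lambda}$ yield $2(1-\eps)\varrho>\lambda$, which makes both cutoff errors $o(\e^{-\lambda\sqrt{V_\infty}R})$. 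For $I_R$, fix any $M>0$ and take $R$ so large that $\omega_{\varrho R}\equiv\omega$ on $B_M(0)$ and $|x+x_0|\geq R_0$ on $\supp\omega_{\varrho R}$; then restricting the integral to $B_M(0)$, applying $(V_3^n)$, and using $|x+x_0|\leq R+M$ produces
\[
I_R\leq -C_0\,\e^{-\lambda\sqrt{V_\infty}(R+M)}\int_{|x|\leq M}\omega(x)^2\,\d x=-C_2\,\e^{-\lambda\sqrt{V_\infty}R},\qquad C_2>0.
\]
A first-order Taylor expansion of $A^{p/(p-1)}B^{-1/(p-1)}$ around $\|\omega\|_{V_\infty}^2=|\omega|_{2p}^{2p}$ (the Nehari relation for $\omega$) then produces the required inequality, and multiplying by $\frac{p-1}{2p}\ell n$ gives the lemma.

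The main delicacy is the bookkeeping of exponential rates: the cutoff $\chi$ generates truncation errors decaying at rate $2(1-\eps)\varrho\sqrt{V_\infty}$, while $(V_3^n)$ contributes a gain of rate $\lambda\sqrt{V_\infty}$. The ranges prescribed for $\varrho$ and $\eps$ are engineered precisely so that the former strictly exceeds the latter, which is what allows the gain extracted from $(V_3^n)$ via the localized integral bound for $I_R$ to survive in the final estimate.
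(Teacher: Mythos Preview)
Your proof is correct and follows the same overall strategy as the paper: reduce to a single bump by disjointness, split $A=\|\omega_{\varrho R}\|_{V_\infty}^2+I_R$, extract the negative contribution $I_R\leq -C_2\e^{-\lambda\sqrt{V_\infty}R}$ from $(V_3^n)$, and control the cutoff errors via Lemma~\ref{lem:estimates_omega} together with the arithmetic $2(1-\eps)\varrho>\lambda$.

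The one point where your route differs from the paper is the final algebraic step. You compute $t_R=(A/B)^{1/(2p-2)}$ explicitly, write $\frac{p-1}{2p}\|w_{1,R}\|_V^2=\frac{p-1}{2p}\,n\,A^{p/(p-1)}B^{-1/(p-1)}$, and then Taylor-expand around $(\|\omega\|_{V_\infty}^2,|\omega|_{2p}^{2p})$. The paper instead keeps $t_R$ implicit (only recording $t_R\in[\tfrac12,2]$), writes the energy as $\ell n\big(\tfrac12\|t_R\omega\|_{V_\infty}^2-\tfrac{1}{2p}|t_R\omega|_{2p}^{2p}+\tfrac12 t_R^2 I_R+O(\e^{-2(1-\eps)\varrho\sqrt{V_\infty}R})\big)$, and invokes the variational inequality $J_\infty(t_R\omega)\leq J_\infty(\omega)=\mathfrak{c}_\infty$. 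The paper's device is slightly cleaner because it requires no control on the Taylor remainder; in your approach one should note that $I_R\to 0$ (from $(V_2)$), so that the quadratic remainder $O(\delta_A^2)$ can be absorbed into the linear term $\frac{p}{p-1}\delta_A$ when $\delta_A<0$. With that observation in hand, your argument goes through without difficulty.
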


\begin{proof}
Since $\bf w_R=(w_{1,R},\ldots,w_{\ell,R})\in\cW^n_\ell$, the equality holds true. To prove the inequality note that that $t_R\in[\frac{1}{2},2]$ for $R$ large enough. Assumption $(V_3^n)$ yields
\begin{align*}
&\irn(V(x)-V_\infty)|t_R\omega_{\varrho R}(x-R(1,0))|^2\d x\\
&\qquad=\int_{|x|\leq\varrho R}\big(V(x+R(1,0))-V_\infty\big)|t_R\omega_{\varrho R}(x)|^2\d x\\
&\qquad=-\frac{C_0}{4}\int_{|x|\leq\varrho R}\e^{-\lambda\sqrt{V_\infty}\,|x+R(1,0)|}|\omega(x)|^2\d x\\
&\qquad\leq -\frac{C_0}{4}\Big(\irn\e^{-\lambda\sqrt{V_\infty}|x|}|\omega(x)|^2\d x\Big)\e^{-\lambda \sqrt{V_\infty}R}=:-2C\e^{-\lambda\sqrt{V_\infty} R}.
\end{align*}
Using Lemma \ref{lem:estimates_omega}, for $R$ large enough we get
\begin{align*}
&\cJ^{\phi_n}(\bf w_R)= \frac{1}{2}\sum_{i=1}^\ell\|w_{i,R}\|_V^2 - \frac{1}{2p}\sum_{i=1}^\ell\irn |w_{i,R}|^{2p}-\frac{\beta}{2p}\sum_{\substack{i,j=1 \\ i\neq j}}^\ell\irn |w_{i,R}|^p|w_{j,R}|^p \\
&=\ell n\left(\frac{1}{2}\|t_R\omega_{\varrho R}(\,\cdot\,-R(1,0)) \|_V^2 - \frac{1}{2p}|t_R\omega_{\varrho R}(\,\cdot\,-R(1,0))|^{2p}_{2p}\right)\\
&=\ell n\left(\frac{1}{2}\|t_R\omega_{\varrho R}\|_{V_\infty}^2 + \frac{1}{2}\irn(V-V_\infty)|t_R\omega_{\varrho R}(\,\cdot\,-R(1,0))|^2 - \frac{1}{2p}|t_R\omega_{\varrho R}|^{2p}_{2p}\right)\\
&=\ell n\left(\frac{1}{2}\|t_R\omega\|_{V_\infty}^2 - C\e^{-\lambda\sqrt{V_\infty} R} - \frac{1}{2p}|t_R\omega|^{2p}_{2p} +\, O\big(\e^{-2(1-\eps)\sqrt{V_\infty}\varrho R}\big)\right)\\
&\leq \ell n\mathfrak{c}_\infty - C_1\e^{-\lambda\sqrt{V_\infty} R},
\end{align*}
because $2(1-\eps)\varrho>\frac{d_{\ell n}+\lambda}{2}\big(1-\frac{d_{\ell n}-\lambda}{d_{\ell n}+\lambda}\big)=\lambda$.
\end{proof}
\smallskip

\begin{proof}[Proof of Theorem \ref{thm:existence}]
Note that $\cW^n_\ell\subset\cN^{\phi_n}$. Hence, from Lemma \ref{lem:energy_estimate} we get
$$c^{\phi_n}\leq\widehat{c}^{\phi_n}<\ell n\mathfrak{c}_\infty,$$
and Corollary \ref{cor:compactness} yields the result.
\end{proof}
\smallskip

\begin{proof}[Proof of Proposition \ref{prop:symmetry_breaking}]
Arguing by contradiction, assume that $\bf u$ is a solution to \eqref{eq:system} satisfying \eqref{eq:symmetries} with $n=\ell^m$ and with $n=\ell^q$ and that $1\leq m<q$. Then, for $k=\ell^{q-m-1}j$ with $j=1,\ldots,\ell-1$ we have that
$$u_1(x)=u_1(\mathrm{e}^{2\pi\mathrm{i}k/\ell^q}x)=u_1(\mathrm{e}^{2\pi\mathrm{i}j/\ell^m\ell}x)=u_{j+1}(x)$$
and, as $1+\beta(\ell-1)\leq 0$, we obtain
\begin{align*}
\|u_1\|_V^2 =\irn|u_1|^{2p}+\beta\sum_{j=1}^{\ell-1}|u_{j+1}|^p|u_1|^p=(1+\beta(\ell-1))\irn|u_1|^{2p}\leq 0,
\end{align*}
a contradiction.
\end{proof}

\section{The limit profiles of the solutions}\label{sec5}

We start with the case $\beta\to 0$.

\begin{proof}[Proof of Theorem \ref{thm:beta_to_0}]
We write $\cJ_k^{\phi_n}$ and $\cN_k^{\phi_n}$ for the functional and the Nehari set associated to the system \eqref{eq:system} with $\beta=\beta_k$, and we define
$$c_k^{\phi_n}:=\inf_{\cN_k^{\phi_n}}\cJ_k^{\phi_n}.$$
As $\cW^n_\ell\subset\cN_k^{\phi_n}$ for every $k\in\n$, invoking Lemma \ref{lem:energy_estimate} we see that
\begin{equation}\label{eq:estimate}
\frac{p-1}{2p}\sum_{i=1}^\ell\|u_{k,i}\|_V^2=c_k^{\phi_n}\leq \widehat{c}^{\phi_n}<\ell n\mathfrak{c}_\infty\qquad\forall k\in\n.
\end{equation}
After passing to a subsequence, we have that $u_{k,i} \rh u_{0,i}$ weakly in $H^1(\rn)$, $u_{k,i} \to u_{0,i}$ strongly in $L^2_\mathrm{loc}(\rn)$ and $u_{k,i} \to u_{0,i}$ a.e. in $\rn$, for each $i=1,\ldots,\ell$. Hence, $u_{0,i} \geq 0$ and $\bf u_0=(u_{0,1},\ldots,u_{0,\ell})\in\cH^{\phi_n}$.

We claim that
\begin{equation*}
u_{0,i}\neq 0\qquad\forall i=1,\ldots,\ell.
\end{equation*}
To prove this claim assume, by contradiction, that $u_{0,i}=0$. Following the argument in the proof of Theorem \ref{thm:splitting} we see that, after passing to a subsequence, there exist $\xi_k\in\rn$, $C_0>0$ and $\delta>0$ such that
\begin{equation} \label{eq:nonnull2}
\int_{B_{C_0+1}(\xi_k)}|u_{k,i}|^{2p}>\delta>0\qquad\forall k\in\n,
\end{equation}
where, either $\xi_k=0$, or $\xi_k = (\zeta_k,0) \in \mathbb{C} \times \mathbb{R}^{N-2}$ and $\zeta_k\to\infty$. Since $u_{k,i} \to 0$ strongly in $L^2_\mathrm{loc}(\rn)$, equation \eqref{eq:nonnull2} implies that $\xi_k\neq 0$. Now, as in Case 2 of Theorem \ref{thm:splitting}, we set
$$w_{k,i}(x):=u_{k,i}(x+\xi_{k}),\qquad i=1,\ldots,\ell,$$
and we take a subsequence satisfying $w_{k,i}\rh w_i$ weakly in $H^1(\rn)$, $w_{k,i}\to w_i$ in $L^{2p}_\mathrm{loc}(\rn)$ and $w_{k,i}\to w_i$ a.e. in $\rn$. Hence, $w_i\in H^1(\rn)^{O(N-2)}$, $w_i\geq 0$ and following the proof of \eqref{eq:norms} we obtain
\begin{equation} \label{eq:norms2}
\|u_{k,i}\|_V^2=\Big\|u_{k,i}-\sum_{j=0}^{n-1}(w_{i}\circ\alpha^{-j })(\,\cdot\,-\alpha^j\xi_k)\Big\|_V^2+n\|w_{i}\|_{V_\infty}^2+o(1).
\end{equation}
Furthermore, following the proof of \eqref{eq:solution} we derive
\begin{equation*} 
\irn(\nabla w_i\cdot\nabla v+V_\infty w_iv)=\irn|w_i|^{2p-2}w_iv+\beta_k\sum\limits_{\substack{j=1\\j\neq i}}^\ell\irn|w_j|^p|w_i|^{p-2}w_iv
\end{equation*}
for every $v\in H^1(\rn)^{O(N-2)}$, and taking $v=w_i$ we get
\begin{equation*}
\|w_i\|_{V_\infty}^2=\irn|w_i|^{2p}+\beta_k\sum\limits_{\substack{j=1\\j\neq i}}^\ell\irn|w_j|^p|w_i|^{p}\leq\irn|w_i|^{2p}.
\end{equation*}
Since, by \eqref{eq:nonnull2},
\begin{equation*}
\int_{B_{C_0+1}(0)}|w_{k,i}|^{2p}\geq\int_{B_{C_0+1}(\xi_k)}|u_{k,i}|^{2p}\geq\delta>0,
\end{equation*}
we see that $w_i\neq 0$. Hence, there exists $t\in(0,1]$ such that $\|tw_i\|_{V_\infty}^2=\irn|tw_i|^{2p}$,  and \eqref{eq:norms2} yields
\begin{align*}
n\mathfrak{c}_\infty\leq n\,\frac{p-1}{2p}\|tw_i\|_{V_\infty}^2\leq n\,\frac{p-1}{2p}\|w_i\|_{V_\infty}^2\leq\frac{p-1}{2p}\|u_{k,i}\|_V^2.
\end{align*}
As a consequence,
\begin{align*}
\ell n\mathfrak{c}_\infty\leq\frac{p-1}{2p}\sum_{i=1}^\ell\|u_{k,i}\|_V^2,
\end{align*}
contradicting \eqref{eq:estimate}. This shows that $u_{0,i}\neq 0$, as claimed.

As $(\cJ_k^{\phi_n})'(\bf u_k)=0$, $u_{k,i}\geq 0$, $u_{0,i}\geq 0$ and $\beta_k<0$, we have that
\begin{align*}
\langle u_{k,i},u_{0,i}\rangle_V&=\irn |u_{k,i}|^{2p-2}u_{k,i}u_{0,i}+\beta_k\sum\limits_{\substack{j=1\\j\neq i}}^\ell\irn|u_{k,j}|^p|u_{k,i}|^{p-2}u_{k,i}u_{0,i} \\
&\leq \irn |u_{k,i}|^{2p-2}u_{k,i}u_{0,i},
\end{align*}
and passing to the limit we obtain $\|u_{0,i}\|_V^2\leq |u_{0,i}|_{2p}^{2p}$. Hence, there exists $s\in(0,1]$ such that $\|su_{0,i}\|_V^2=|su_{0,i}|_{2p}^{2p}$ and we have that
\begin{equation} \label{eq:1}
\mathfrak{c}^{G_n}\leq\frac{p-1}{2p}\|su_{0,i}\|_V^2\leq\frac{p-1}{2p}\|u_{0,i}\|_V^2\leq\liminf_{k\to\infty}\frac{p-1}{2p}\|u_{k,i}\|_V^2,
\end{equation}
with $\mathfrak{c}^{G_n}$ as in \eqref{eq:frak_c}. We claim that these are equalities.

To prove this claim, let $v_k\in\cM^{G_n}$ be such that $J(v_k)=\frac{p-1}{2p}\|v_k\|_V^2\to \mathfrak{c}^{G_n}$. Set $u_{k,1}:=v_k$ and define $u_{k,j+1}$ as in \eqref{eq:symmetries} for $j=1,\ldots,\ell-1$. Set $\bf u_k=(u_{k,1},\ldots,u_{k,\ell})$. Since  $(v_k)$ is bounded in $H^1(\rn)$ and $\beta_k\to 0$, we have that
$$\lim_{k\to\infty}\beta_k\irn|u_{k,j}|^p|u_{k,i}|^p=0\qquad\text{for every \ }i,j,$$
so, by Proposition \ref{prop:nehari}$(d)$, for $k$ large enough there exists $s_k\in(0,\infty)$ such that $s_k\bf u_k\in\cN_k^{\phi_n}$ and $s_k\to 1$ as $k\to\infty$. Thus,
\begin{equation} \label{eq:2}
c_k^{\phi_n}\leq\cJ^{\phi_n}(s_k\bf u_k)=\frac{p-1}{2p}\sum_{i=1}^\ell\|s_ku_{k,i}\|_V^2=\frac{p-1}{2p}\ell s_k^2\|v_k\|_V^2\longrightarrow\ell \mathfrak{c}^{G_n}.
\end{equation}
Combining \eqref{eq:1} and \eqref{eq:2} we see that $s=1$, thus $u_{0,i}\in\cM^{G_n}$, that $u_{k,i}\to u_{0,i}$ strongly in $H^1(\rn)$ and that
$$J(u_{0,i})=\mathfrak{c}^{G_n}=\frac{1}{\ell}\,c_k^{\phi_n}<n\,\mathfrak{c}_\infty.$$
This completes the proof.
\end{proof}
\smallskip

Now we turn to the case $\beta\to-\infty$. For the proof of Theorem \ref{thm:partition} we need the following result.

\begin{lemma} \label{lem:Linfty}
Let $\beta_k<0$ and $(u_{k,1},\ldots,u_{k,\ell})$ be a solution to \eqref{eq:system} with $\beta=\beta_k$ such that $u_{k,i}\to u_{\infty,i}$ strongly in $H^1(\rn)$ for every $i=1,\ldots,\ell$. Then $(u_{k,i})$ is uniformly bounded in $L^\infty(\rn)$.
\end{lemma}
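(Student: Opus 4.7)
The plan is to reduce the system to a scalar differential subsolution inequality for each component by exploiting the sign of $\beta_k$, and then to run a Brezis--Kato/Moser bootstrap whose constants depend only on the $H^1$-norm of the $u_{k,i}$.

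First I would observe that since $u_{k,i}>0$ (as a solution to \eqref{eq:system}) and $\beta_k<0$, the $i$-th equation of the system yields the pointwise inequality
$$-\Delta u_{k,i}+V(x)u_{k,i}\leq u_{k,i}^{2p-1}\qquad\text{in }\rn.$$
Dropping the coupling term is crucial here, because no bound on $|\beta_k|$ is assumed (indeed, in the intended application of the lemma $\beta_k\to-\infty$); all subsequent estimates must therefore be $\beta_k$-free.

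Next, the strong $H^1$-convergence hypothesis guarantees $\sup_k\|u_{k,i}\|_{H^1(\rn)}<\infty$, and hence $\sup_k\|u_{k,i}\|_{L^{2^*}(\rn)}<\infty$ by Sobolev, where $2^*=\frac{2N}{N-2}$. Setting $a_k(x):=u_{k,i}(x)^{2p-2}$, the strictly subcritical assumption $p<\frac{N}{N-2}$ is equivalent to $2p-2<\frac{4}{N-2}=2^*-2$, which gives
$$\sup_k\|a_k\|_{L^q(\rn)}<\infty,\qquad q:=\frac{2^*}{2p-2}>\frac{N}{2}.$$
Rewriting the inequality as $-\Delta u_{k,i}+V(x)u_{k,i}\leq a_k(x)u_{k,i}$, and recalling that $V\geq\inf V>0$, a Brezis--Kato type iteration (with coefficient in $L^q$ for some $q>N/2$) upgrades the uniform $L^{2^*}$ bound to a uniform $L^r$ bound for every $r<\infty$, with constants depending only on $\sup_k\|a_k\|_{L^q}$. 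Once $r$ is chosen so large that $u_{k,i}^{2p-1}\in L^s(\rn)$ uniformly for some $s>\frac{N}{2}$, standard $L^p$-elliptic regularity together with the Sobolev embedding yields the uniform $L^\infty$ bound.

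The main technical point will be tracking that at every step of the iteration the constants depend only on $\sup_k\|u_{k,i}\|_{H^1}$ and $\sup_k\|a_k\|_{L^q}$, so that the final $L^\infty$ bound is genuinely uniform in $k$ despite $\beta_k$ being unbounded. An equally natural alternative would be to perform the Moser iteration directly on the subsolution inequality, using truncations of $u_{k,i}^{2s-1}$ as test functions and iterating the Sobolev inequality along a sequence of exponents that diverges to infinity precisely because $p<\frac{N}{N-2}$.
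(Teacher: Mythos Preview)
Your approach is essentially the paper's: both exploit $\beta_k<0$ to discard the coupling and reduce to the scalar subsolution inequality $-\Delta u_{k,i}+Vu_{k,i}\leq u_{k,i}^{2p-1}$, so that every estimate is $\beta_k$-free, and then bootstrap from $H^1$ to $L^\infty$. The paper carries out the Moser iteration by hand, testing against $u_{k,i}\min\{u_{k,i}^{2s},L^2\}$ and using the \emph{strong} convergence $u_{k,i}\to u_{\infty,i}$ to split $|u_{k,i}|^{2p-2}$ into a piece small in $L^{p/(p-1)}$ and a bounded piece. You instead observe that strict subcriticality already gives $a_k=u_{k,i}^{2p-2}\in L^q$ uniformly with $q=\tfrac{2^*}{2p-2}>\tfrac{N}{2}$, so Brezis--Kato applies off the shelf. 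Your organization is slightly cleaner---it uses only boundedness in $H^1$, not the full strong convergence---but the mechanism is identical.

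One caveat: in your penultimate step you invoke ``standard $L^p$-elliptic regularity'' to pass from uniform $L^r$ bounds to $L^\infty$. Calder\'on--Zygmund is for equations, not inequalities, and returning to the full system reintroduces the unbounded $\beta_k$ on the right-hand side. The clean fix is exactly the alternative you name at the end: run the Moser iteration on the subsolution inequality all the way to $L^\infty$, tracking the polynomial growth of the constants in $s$ so the iterated product converges (equivalently, quote the global $\sup$-estimate for nonnegative subsolutions of $-\Delta u+Vu\leq a(x)u$ with $a\in L^q$, $q>N/2$). The paper's own final step has the same slip---it asserts that the full right-hand side $f_{k,i}$ is uniformly bounded in $L^q$, which is not justified when $\beta_k$ is unbounded---so your proposed alternative is in fact the correct way to close the argument in both write-ups.
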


\begin{proof}
Let $s\geq 0$ and assume that $u_{k,i}\in L^{2(s+1)}(\rn)$ for every $k\in\n$. Fix $L>0$ and define $w_{k,i}:=u_{k,i}\min\{u^s_{k,i},L\}$. Then,
\begin{align} \label{eq:nabla}
&\irn|\nabla w_{k,i}|^2 \leq (1+s)\irn\nabla u_{k,i}\cdot\nabla(u_{k,i}\min\{u^{2s}_{k,i},L^2\})\\
&=(1+s)\Big(\irn|u_{k,i}|^{2p-2}w_{k,i}^2 + \beta\sum_{\substack{j=1 \\ j\neq i}}^\ell\irn|u_{k,j}|^p|u_{k,i}|^{p-2}w_{k,i}^2-\irn Vw_{k,i}^2\Big)\nonumber\\
&\leq (1+s)\irn|u_{k,i}|^{2p-2}w_{k,i}^2.\nonumber
\end{align}
On the other hand, for any $K>0$ we have that
\begin{align*}
&\irn|u_{k,i}|^{2p-2}w_{k,i}^2 \\
&\leq\irn(|u_{k,i}|^{2p-2}-|u_{\infty,i}|^{2p-2})w_{k,i}^2 + \int_{|u_{\infty,i}|^{2p-2}\geq K}|u_{\infty,i}|^{2p-2}w_{k,i}^2 + K\irn w_{k,i}^2 \\
&\leq\Big||u_{k,i}|^{2p-2}-|u_{\infty,i}|^{2p-2}\Big|_{2p}^{2p-2}|w_{k,i}|_{2p}^2 + \Big(\int_{|u_{\infty,i}|^{2p-2}\geq K}|u_{\infty,i}|^{2p}\Big)^\frac{p-1}{p}|w_{k,i}|_{2p}^2 \\
&\qquad + K|w_{k,i}|_2^2.
\end{align*}
As $u_{k,i}\to u_{\infty,i}$ strongly in $H^1(\rn)$, choosing $k_0>0$ and $K$ sufficiently large, we get that
\begin{equation} \label{eq:nonl}
\irn|u_{k,i}|^{2p-2}w_{k,i}^2\leq\frac{1}{2}|w_{k,i}|_{2p}^2 + K|w_{k,i}|_2^2\qquad\text{for every \ }k\geq k_0.
\end{equation}
Since $H^1(\rn)$ is continuously embedded into $L^{2p}(\rn)$, we derive from \eqref{eq:nabla} and \eqref{eq:nonl} that, for every $k\in\n$,
$$|w_{k,i}|_{2p}^2\leq K_s|w_{k,i}|_2^2,$$
for some constant $K_s$ independent of $L$, and letting $L\to\infty$ we get
\begin{equation*}
|u_{k,i}|_{2p(s+1)}^{2(s+1)}=|u_{k,i}^{s+1}|_{2p}^2\leq K_s|u_{k,i}^{s+1}|_2^2=K_s|u_{k,i}|_{2(s+1)}^{2(s+1)}.
\end{equation*}
As $(u_{k,i})$ is uniformly bounded in $L^2(\rn)$, iterating this inequality starting with $s=0$ and using interpolation, we conclude that $(u_{k,i})$ is uniformly bounded in $L^q(\rn)$ for any $q\in[2,\infty)$ and each $i=1,\ldots,\ell$. This implies that
$$f_{k,i}:=|u_{k,i}|^{2p-2}u_{k,i}+\beta\sum_{j\neq i}|u_{k,j}|^p|u_{k,i}|^{p-2}u_{k,i}$$
is uniformly bounded in $L^q(\rn)$ for any $q\in[2,\infty)$. Then, by the Calderón-Zygmund inequality, $(u_{k,i})$ is uniformly bounded in $W^{2,q}(\rn)$ for every $q\in[2,\infty)$ and, choosing $q$ large enough, we derive from the Sobolev embedding theorem that $(u_{k,i})$ is uniformly bounded in $L^\infty(\rn)$, as claimed.
\end{proof}

\begin{proof}[Proof of Theorem \ref{thm:partition}]
$(i):$ \ As before, we write $\cJ_k^{\phi_n}$ and $\cN_k^{\phi_n}$ for the functional and the Nehari set associated to the system \eqref{eq:system} with $\beta=\beta_k$, and set
$$c_k^{\phi_n}:=\inf_{\cN_k^{\phi_n}}\cJ_k^{\phi_n}.$$
Arguing as in the proof of Theorem \ref{thm:beta_to_0}, we see that, after passing to a subsequence, $u_{k,i} \rh u_{\infty,i}$ weakly in $H^1(\rn)$, $u_{k,i} \to u_{\infty,i}$ strongly in $L^2_\mathrm{loc}(\rn)$ and $u_{k,i} \to u_{\infty,i}$ a.e. in $\rn$, for each $i=1,\ldots,\ell$. Hence, $u_{\infty,i} \geq 0$ and $\bf u_\infty=(u_{\infty,1},\ldots,u_{\infty,\ell})\in\cH^{\phi_n}$, so $\bf u_\infty$ satisfies \eqref{eq:symmetries}. We also get that 
\begin{equation*}
u_{\infty,i}\neq 0 \quad \text{and} \quad \|u_{\infty,i}\|_V^2\leq |u_{\infty,i}|_{2p}^{2p}\qquad\forall i=1,\ldots,\ell.
\end{equation*}
Furthermore, as $(\cJ_k^{\phi_n})'(\bf u_k)=0$, we have that, for every $j\neq i$,
\begin{align*}
0\leq\irn |u_{k,j}|^p|u_{k,i}|^p\leq \frac{|u_{k,i}|^{2p}_{2p}}{-\beta_k}\leq \frac{C}{-\beta_k}.
\end{align*}
As $\beta_k\to-\infty$, passing to the limit and using Fatou's lemma we obtain
$$0 \leq \irn |u_{\infty,j}|^p|u_{\infty,i}|^p \leq \liminf_{k \to \infty} \irn |u_{k,j}|^p|k_{n,i}|^p = 0.$$
This implies that $u_{\infty,j} u_{\infty,i} = 0$ a.e. in $\rn$ whenever $i\neq j$. 

Let $s\in(0,1]$ be such that $\|su_{\infty,i}\|_V^2=|su_{\infty,i}|_{2p}^{2p}$. Then, $s\bf u_\infty\in\cW^n_\ell$ and using \eqref{eq:estimate} we get
\begin{align*}
\widehat{c}^{\phi_n}&\leq\frac{p-1}{2p}\sum_{i=1}^\ell\|su_{\infty,i}\|_V^2\leq\frac{p-1}{2p}\sum_{i=1}^\ell\|u_{\infty,i}\|_V^2 \\
&\leq\frac{p-1}{2p}\sum_{i=1}^\ell\liminf_{k\to\infty}\|u_{k,i}\|_V^2\leq \widehat{c}^{\phi_n}.
\end{align*}
This proves that $s=1$, \ $\bf u_\infty\in\cW^n_\ell$, \ $u_{k,i}\to u_{\infty,i}$ strongly in $H^1(\rn)$ \ and
\begin{equation}  \label{eq:weak_optimal}
\widehat{c}^{\phi_n}=\frac{p-1}{2p}\sum_{i=1}^\ell\|u_{\infty,i}\|_V^2.
\end{equation}
Finally, as \ $\lim\limits_{k\to\infty}\|u_{k,i}\|_V^2=\|u_{\infty,i}\|_V^2=|u_{\infty,i}|_{2p}^{2p}=\lim\limits_{k\to\infty}|u_{k,i}|_{2p}^{2p}$, \ from
$$\lim_{k\to\infty}\|u_{k,i}\|_V^2=\lim_{k\to\infty}|u_{k,i}|_{2p}^{2p}+\lim_{k\to\infty}\beta_k\sum\limits_{\substack{j=1\\j\neq i}}^\ell\irn|u_{k,j}|^p|u_{k,i}|^p$$
we obtain
\begin{equation*}
\irn \beta_k u_{k,j}^p u_{k,i}^p\to 0 \text{ as } k\to \infty\quad \text{whenever } i\neq j.
\end{equation*}

$(ii):$ \ It follows from Lemma \ref{lem:Linfty} and \cite[Theorem B.2]{cpt} that $(u_{k,i})$ is uniformly bounded in $\cC^{0,\alpha}(K)$ for each compact subset $K$ of $\rn$ and $\alpha\in(0,1)$. So from the Arzelà-Ascoli theorem we get that $u_{\infty,i}\in\cC^0(\rn)$. Therefore $\o_i:=\{x\in\rn:u_{\infty,i}(x)>0\}$ is open. Since $u_{\infty,i}u_{\infty,j}=0$ if $i\neq j$ and $\bf u_\infty$ satisfies \eqref{eq:symmetries}, we have that $\o_i\cap\o_j=\emptyset$ if $i\neq j$ and the $\ell$-tuple $(\o_1,\ldots,\o_\ell)$ satisfies $(S_1)$ and $(S_2)$. Thus, it is an $(n,\ell)$-pinwheel partition.

Since $\bf u_\infty\in\cW^n_\ell$, we have that $u_{\infty,i}$ belongs to the Nehari manifold $\cM_{\o_i}$ defined in \eqref{eq:bdd_nehari}. Therefore, $\frac{p-1}{2p}\|u_{\infty,i}\|_V^2\geq \mathfrak{c}_{\o_i}$. Equality must hold true as, otherwise, there would exist $v_1\in\cM_{\o_1}$ such that $\frac{p-1}{2p}\|u_{\infty,1}\|_V^2>\frac{p-1}{2p}\|v_1\|_V^2\geq \mathfrak{c}_{\o_i}$ and, defining $v_{j+1}$ as in \eqref{eq:symmetries}, we would have that $(v_1,\ldots,v_\ell)\in\cW_\ell^n$ and
$$\frac{p-1}{2p}\sum_{i=1}^\ell\|v_i\|_V^2<\frac{p-1}{2p}\sum_{i=1}^\ell\|u_{\infty,i}\|_V^2=\widehat{c}^{\phi_n}$$
by \eqref{eq:weak_optimal}, which is a contradiction. This shows that $u_{\infty,i}$ is a least energy solution of \eqref{eq:equation_bdd} in $\o_i$. Now, since $\frac{p-1}{2p}\|u_{\infty,i}\|_V^2=\mathfrak{c}_{\o_i}$, we get
\begin{align*}
\inf_{(\t_1,\ldots,\t_\ell)\in\cP^{n}_\ell}\sum_{j=1}^\ell\mathfrak{c}_{\t_j}\leq\sum_{j=1}^\ell\mathfrak{c}_{\o_j}=\widehat{c}^{\phi_n}\leq\inf_{(\t_1,\ldots,\t_\ell)\in\cP^{n}_\ell}\sum_{j=1}^\ell\mathfrak{c}_{\t_j}.
\end{align*}
This shows that $(u_{\infty,1},\ldots,u_{\infty,\ell})$ is an optimal $(n,\ell)$-pinwheel partition.

$(iii):$ \ This is a local statement. As mentioned above, $(u_{k,i})$ is uniformly bounded in $\cC^{0,\alpha}(\o)$ for each open subset $\o$ compactly contained in $\rn$ and $\alpha\in(0,1)$. So, from the Arzelà-Ascoli theorem, we get that $u_{k,i}\to u_{\infty,i}$ in $\cC^{0,\alpha}(\o)$. Thus, all hypotheses of \cite[Theorem C.1]{cpt} are satisfied and $(iii)$ follows from it.

$(iv):$ \ Let $G_{2n}$ be the group defined in Example \ref{example} with $\ell=2$, and let $\tau_n:G_{2n}\to\z_2:=\{1,-1\}$ be the homomorphism given by $\tau_n(\e^{2\pi\i/2n})=-1$ and $\tau_n(\theta)=1$ for every $\theta\in O(N-2)$. A solution to the Schrödinger equation \eqref{eq:equation} satisfying
\begin{equation}\label{eq:nodal}
u(gx)=\tau_n(g)u(x)\qquad\text{for all \ }g\in G_{2n}, \ x\in\rn,
\end{equation}
is a critical point of the functional $J:H^1(\rn)^{\tau_n}\to\r$ defined by \eqref{eq:J} on the space
$$H^1(\rn)^{\tau_n}:=\{u\in H^1(\rn):u\text{ satisfies }\eqref{eq:nodal}\}.$$
The nontrivial ones belong to the Nehari manifold
$$\cM^{\tau_n}:=\{u\in H^1(\rn)^{\tau_n}:u\neq 0, \ \|u\|_V^2 =|u|_{2p}^{2p}\},$$
which is a natural constraint for $J$. Note that every nontrivial function satisfying \eqref{eq:nodal} is nonradial and changes sign.

There is a one-to-one correspondence
$$\cW^n_2\to\cM^{\tau_n},\qquad (u_1,u_2) \mapsto u_1-u_2,$$
whose inverse is $u\mapsto (u^+,-u^-)$, with $u^+:=\max\{u,0\}$ and $u^-:=\min\{u,0\}$, \ satisfying 
$$\frac{p-1}{2p}\Big(\|u_1\|^2_V+\|u_2\|^2_V\Big)=J(u_1-u_2).$$
Therefore,
\begin{align*}
J(u_{\infty,1}-u_{\infty,2})&=\frac{p-1}{2p}\Big(\|u_{\infty,1}\|^2_V+\|u_{\infty,2}\|^2_V\Big) \\
&=\inf_{(u_1,u_2)\in\cW^n_2}\frac{p-1}{2p}\Big(\|u_1\|^2_V+\|u_2\|^2_V\Big)=\inf_{u\in\cM^{\tau_n}}J(u).
\end{align*}
This shows that \ $u_{\infty,1}-u_{\infty,2}$ \ is a least energy solution to \eqref{eq:equation} and \eqref{eq:nodal}.
\end{proof}

\appendix

\section{An auxiliary result}

\begin{lemma} \label{lem:A1}
Assume $v_k\rh v$ weakly in $H^1(\rn)$, $\xi_k\in\rn$ satisfies $|\xi_k|\to\infty$ and $V\in\cC^0(\rn)$ satisfies $(V_2)$. Set $V_k(x):=V(x+\xi_k)$. Then,
$$\lim_{k\to\infty}\|v_k\|_{V_k}^2-\lim_{k\to\infty}\|v_k-v\|_{V_k}^2=\|v\|_{V_\infty}^2.$$
\end{lemma}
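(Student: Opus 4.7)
The plan is to expand the squared norm algebraically and reduce the identity to two limits: one controlled by weak $H^1$-convergence and one by pointwise convergence of the shifted potential $V_k$.

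First, I would use the pointwise identity $|\nabla v_k|^2 - |\nabla(v_k-v)|^2 = 2\nabla v_k \cdot \nabla v - |\nabla v|^2$, and the analogous one for the $L^2$-part, to write
\begin{equation*}
\|v_k\|_{V_k}^2 - \|v_k - v\|_{V_k}^2 = \int_{\mathbb{R}^N}\bigl(2\nabla v_k \cdot \nabla v - |\nabla v|^2\bigr) + \int_{\mathbb{R}^N} V_k(x)\bigl(2 v_k v - v^2\bigr)\,dx.
\end{equation*}
Weak convergence $v_k \rightharpoonup v$ in $H^1(\mathbb{R}^N)$ immediately yields $\int 2\nabla v_k \cdot \nabla v \to 2\int|\nabla v|^2$, so the first integral tends to $\int|\nabla v|^2$.

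The remaining task is to show that $\int V_k(2v_k v - v^2)\,dx \to V_\infty \int v^2$. Hypothesis $(V_2)$ together with the continuity of $V$ imply that $V$ is bounded on $\mathbb{R}^N$; moreover, since $|\xi_k|\to\infty$, for every fixed $x\in\mathbb{R}^N$ one has $|x+\xi_k|\to\infty$, hence $V_k(x)\to V_\infty$ pointwise. Splitting
\begin{equation*}
\int V_k v_k v\,dx = V_\infty \int v_k v\,dx + \int (V_k - V_\infty) v_k v\,dx,
\end{equation*}
weak $L^2$-convergence gives $V_\infty \int v_k v \to V_\infty \int v^2$, while the second term is bounded in absolute value by $\|v_k\|_2\,\|(V_k - V_\infty)v\|_2$; since $\|v_k\|_2$ is bounded and $(V_k - V_\infty)v \to 0$ pointwise with $L^2$-majorant $2\|V\|_\infty |v|$, dominated convergence gives $\|(V_k - V_\infty)v\|_2\to 0$. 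The same majorant argument yields $\int V_k v^2 \to V_\infty \int v^2$, so altogether
\begin{equation*}
\int V_k(2v_k v - v^2)\,dx \to 2V_\infty\int v^2 - V_\infty\int v^2 = V_\infty\int v^2.
\end{equation*}

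Summing the two contributions gives $\int |\nabla v|^2 + V_\infty \int v^2 = \|v\|_{V_\infty}^2$, as required. The only delicate point is the cross term $\int V_k v_k v$, since $v_k$ converges only weakly; the subtraction of $V_\infty$ turns the problematic factor $V_k$ into the product $(V_k - V_\infty)v$, which converges to zero \emph{strongly} in $L^2$, and this is precisely what rescues the estimate.
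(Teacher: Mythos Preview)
Your proof is correct and follows essentially the same line as the paper's: both expand the difference of squared norms, use weak $H^1$-convergence for the gradient and main $L^2$ terms, and then show that the error integrals $\int(V_k-V_\infty)v_kv$ and $\int(V_k-V_\infty)v^2$ vanish. The only cosmetic difference is that the paper controls these errors by an explicit $\varepsilon$--$R$ splitting (tail of $v$ outside $B_R$, smallness of $|V_k-V_\infty|$ inside $B_R$), whereas you invoke dominated convergence on $(V_k-V_\infty)v$; these are interchangeable.
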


\begin{proof}
As $v_k\rh v$ weakly in $H^1(\rn)$ one has
\begin{align*}
\|v\|_{V_\infty}^2+o(1)&=\|v_k\|_{V_\infty}^2-\|v_k-v\|_{V_\infty}^2\\
&=\|v_k\|_{V_k}^2-\|v_k-v\|_{V_k}^2+2\irn(V_\infty-V_k)v_kv-\irn(V_\infty-V_k)v^2.
\end{align*}
Given $\eps>0$, choose $R>0$ large enough so that
$$\int_{\rn\smallsetminus B_R}|V_\infty-V_k||v|^2\leq 2\sup_{x\in\rn}V(x)\int_{\rn\smallsetminus B_R}|v|^2<\frac{\eps}{2}.$$
Now, take $k_0$ such that
$$|V_\infty - V(x+\xi_k)|<\frac{\eps}{2|v|_2^2}=:\delta\quad\text{for every \ }x\in B_R\text{ \ and \ }k\geq k_0.$$
Then, for $k\geq k_0$ we have that
$$\irn|V_\infty-V_k||v|^2\leq\int_{B_R}|V_\infty-V_k||v|^2+\int_{\rn\smallsetminus B_R}|V_\infty-V_k||v|^2<\eps$$
and
\begin{align*}
\irn|(V_\infty-V_k)v_kv|&\leq\left(\irn|V_\infty-V_k||v_k|^2\right)^\frac{1}{2}\left(\irn|V_\infty-V_k||v|^2\right)^\frac{1}{2}\\
&\leq C\sqrt{\eps}.
\end{align*}
This completes the proof.
\end{proof}

\bigskip

\begin{flushleft}
\textbf{Mónica Clapp}\\
Instituto de Matemáticas\\
Universidad Nacional Autónoma de México \\
Campus Juriquilla\\
Boulevard Juriquilla 3001\\
76230 Querétaro, Qro., Mexico\\
\texttt{monica.clapp@im.unam.mx} 
\medskip

\textbf{Angela Pistoia}\\
Dipartimento di Metodi e Modelli Matematici\\
La Sapienza Università di Roma\\
Via Antonio Scarpa 16 \\
00161 Roma, Italy\\
\texttt{angela.pistoia@uniroma1.it} 
\end{flushleft}
	
\end{document}